\documentclass[11 pt]{amsart}
\usepackage{amssymb,amsmath,amsthm}
\usepackage{xcolor}
\usepackage[shortlabels]{enumitem}
\usepackage{}
\newtheorem{theorem}{Theorem}[section]

\newtheorem{lemma}[theorem]{Lemma}
\newtheorem{proposition}[theorem]{Proposition}
\newtheorem{definition}[theorem]{Definition}
\newtheorem{corollary}[theorem]{Corollary}

\theoremstyle{definition}
\newtheorem{remark}[theorem]{Remark}

\newcommand{\T}{\mathbb{T}}

\newcommand{\F}{\mathbb{F}}
\DeclareMathOperator{\lk}{lk}
\DeclareMathOperator{\st}{st}

\title{On free components of Artin and Coxeter groups}

\author[G. Dumas]{Guillaume Dumas}
\address{\parbox{\linewidth}{Department of Mathematics, University of Maryland, College Park, \\
		4176 Campus Drive, College Park, MD 20742}}
\email{gdumas@umd.edu}
\urladdr{https://perso.ens-lyon.fr/guillaume.dumas/}

\author[J. Huang]{Jingyin Huang}
\address{\parbox{\linewidth}{Department of Mathematics, The Ohio State University,\\
231 W. 18th
Ave, Columbus, OH 43210}}
\email{huang.929@osu.edu}

\author[S. Kunnawalkam Elayavalli]{Srivatsav Kunnawalkam Elayavalli}
\address{\parbox{\linewidth}{Department of Mathematics, University of Maryland, College Park, \\
		4176 Campus Drive, College Park, MD 20742}}
\email{sriva@umd.edu}
\urladdr{https://sites.google.com/view/srivatsavke}

\author[L. Teryoshin]{Lizzy Teryoshin}
\address{\parbox{\linewidth}{Department of Mathematics, University of California, San Diego, \\
		9500 Gilman Drive, La Jolla, CA 92093}}
\email{eteryoshin@ucsd.edu}

\begin{document}

\begin{abstract} The number of connected components can be remembered by the von Neumann algebra among Artin groups, the only possible exception being the case that corresponds to the free group factor problem. In the case of Coxeter groups, this result is obtained in the absence of relatively hyperbolicity. We also discuss a specific case of the analogous problem in measure equivalence where each factor group is a product of nonabelian free groups. 
\end{abstract}
	\maketitle

\section{Introduction}

In recent years, the study of classification and rigidity of von Neumann algebras of graph products has generated some interesting mathematics. We point the reader to \cite{horbez2025rigiditygraphproductvon} and the references therein for a detailed summary of the story. Despite this, there is not much study at the moment into the von Neumann algebras of the family of general Artin or Coxeter groups over defining graphs with arbitrary edge relations (see \cite{blufstein2025strongsolidityclassificationcoxeter} for a recent result on biexactness for general Coxeter groups). 

A rather basic question is whether the number of connected components of the defining graph can be recovered by the group von Neumann algebra, among Artin/Coxeter groups in general. This is a \emph{notorious} question in full generality because it would solve the free group factor problem if proved. Indeed, the connected components could be complete graphs and all of the edge relations could be commutation, in which case the associated von Neumann algebras are amenable, and thus yield free group factors of the appropriate rank \cite{Dykema}. In this note, we show that this is the only obstruction for this phenomenon in Artin groups. 

\begin{theorem}
\label{thm:Artin}
    Suppose $\Gamma_1$ and $\Gamma_2$ are finite simple graphs, and $A_{\Gamma_i}$ are the associated Artin groups with arbitrary choices of defining relations. Suppose each connected component of $\Gamma_1$ is not a complete graph with all edge relations equal to $2$. If $L(A_{\Gamma_1})\cong L(A_{\Gamma_2})$, then $\Gamma_1$ and $\Gamma_2$ have the same number of connected components. 
\end{theorem}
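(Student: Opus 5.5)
The strategy is to reduce the statement to a free-product rigidity theorem for von Neumann algebras in the style of Ozawa's Kurosh-type results, or of Ioana--Peterson--Popa and Houdayer--Ueda. If $\Gamma$ has connected components $\Lambda_1,\dots,\Lambda_n$, then $A_\Gamma \cong A_{\Lambda_1} * \cdots * A_{\Lambda_n}$, so
\[
L(A_\Gamma)\cong L(A_{\Lambda_1}) * \cdots * L(A_{\Lambda_n}).
\]
It therefore suffices to show two things. First, every factor $L(A_{\Lambda})$ coming from a connected component that is not a complete graph with all labels $2$ is ``free-indecomposable'' in the appropriate sense. Second, the number of such pieces is an invariant. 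Components that are complete with all labels $2$ give $L(\mathbb{Z}^k)$, which is diffuse amenable; the free-group-factor ambiguity sits entirely in these pieces, and that is why they are excluded on the $\Gamma_1$ side.

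\textbf{Step 1: structure of one component.} Let $\Lambda$ be connected and not complete-with-all-$2$ labels. If $\Lambda$ has at least two vertices, pick an edge $\{s,t\}$. If its label $m\ge 3$, the dihedral Artin group $\langle s,t\rangle$ has infinite center and is nonamenable. If all labels are $2$ but $\Lambda$ is not complete, there are non-adjacent vertices $u,v$ with a common path, and $\langle u,v\rangle\cong F_2$ lies in a chain of subgroups with infinite centers or commuting pieces. A single vertex gives $\mathbb{Z}$, which is complete with all labels $2$ and hence excluded. The aim is to show that $A_\Lambda$ is nonamenable and generated by a chain of nonamenable-or-diffuse subgroups $H_1,\dots,H_r$, each having a diffuse (infinite-index-free) center or a commuting partner, with consecutive subgroups intersecting in infinite groups. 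Concretely, use the edges of a spanning connected structure: for an edge with label $2$, $\langle s,t\rangle\cong\mathbb{Z}^2$; for a label $m\ge3$, the center is generated by $(st)^{m/2}$ or $(st)^m$. This makes $L(A_\Lambda)$ a nonamenable II$_1$ algebra that cannot embed (in Popa's intertwining sense) into a free-product position without being conjugated into a single free factor. This rests on the standard fact that in $M_1*M_2$, a diffuse subalgebra with nonamenable relative commutant (or a commuting chain) must intertwine into some $M_i$.

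\textbf{Step 2: counting via Ozawa's Kurosh-type theorem.} Apply the Kurosh-type rigidity theorem of Ozawa, extended by Houdayer--Ueda to the non-prime or amenable-allowed setting. Let $M=L(A_{\Gamma_1})=*_{i} M_i$, with each $M_i$ nonamenable and built from commuting diffuse pieces as in Step 1, and let $M\cong *_j N_j$ with the $N_j=L(A_{\Gamma_2^{(j)}})$. Then each $M_i$ unitarily embeds into some $N_j$, and conversely. A free factor cannot contain two conjugated free factors from the other side, so the assignment $i\mapsto j$ is a bijection. For this to work, each $N_j$ must be non-amenable too. We obtain that by noting that if $\Gamma_2$ had an all-$2$ complete component, its amenable diffuse free factor $L(\mathbb{Z}^k)$ would have to be absorbed, and Houdayer--Ueda shows that in a free product of nonamenable ``rigid'' factors there is no room for an extra diffuse amenable free summand. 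Equivalently, one can count via the first $L^2$-Betti number together with the number of non-amenable free summands.

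\textbf{Main obstacle.} The hard step is Step 1. The components need not be prime: $A_\Lambda$ may be a product, for instance $F_2\times F_2$ or $\mathbb{Z}\times B_3$. What is needed is the intertwining property for \emph{all} such algebras uniformly, namely that $L(A_\Lambda)$ cannot be split across free factors. The approach I would try first is a ``chain of commuting diffuse subalgebras'' argument. Every standard generator $s$ of a connected $\Lambda$ lies in an edge subgroup with infinite center. Then $L(\langle s\rangle)$ has a relative commutant containing a diffuse algebra. Popa's dichotomy in free products then forces $L(\langle s\rangle)$ to intertwine into one free factor, and connectivity propagates this through adjacent generators to all of $L(A_\Lambda)$. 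The delicate point is propagating intertwining along edges consistently, using the fact that the intersections are diffuse. The all-$2$ complete case is precisely where this chain argument gives an amenable piece, and there the counting fails.
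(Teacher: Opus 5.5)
\textbf{There is a genuine gap in Step 1.} Step 1 is where all the content of the theorem lies.

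Your mechanism is that $L(\langle s\rangle)$, for a standard generator $s$, must intertwine into a free factor because its relative commutant contains a diffuse algebra. You then propagate this along a chain of edge subgroups, including $\langle s,t\rangle\cong\mathbb{Z}^2$ for label-$2$ edges. Diffuseness gives nothing here. In $L(\mathbb{F}_2)=L(\langle a\rangle)*L(\langle b\rangle)$, the algebra $L(\langle ab\rangle)$ is diffuse abelian, so it lies in its own diffuse relative commutant. Yet it intertwines into neither free factor, since no nontrivial power of $ab$ is conjugate into $\langle a\rangle$ or $\langle b\rangle$.

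Your argument never uses nonamenability. So it would apply verbatim to the excluded complete all-$2$ components, where intertwining into a free factor fails for exactly this reason.

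The relevant source of rigidity is spectral gap: a subalgebra commuting with a \emph{nonamenable} subalgebra is rigid for the free product deformation. This is what underlies Proposition~\ref{prop:rigidity_conditions}(i),(ii). Amenable pieces such as label-$2$ edge groups cannot serve as links. Even a generator inside a nonamenable piece need not have nonamenable relative commutant. For a component that is a single edge labeled $3$ (so $A_\Lambda\cong B_3$), the relative commutant of $L(\langle\sigma_1\rangle)$ in $L(A_{\Gamma_1})$ is $L(\langle\sigma_1,\Delta^2\rangle)$, which is abelian. What is rigid there is the center $\langle\Delta^2\rangle$, because it commutes with the nonamenable $B_3$.

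Also, propagation should be done at the level of rigidity (uniform convergence of $\alpha_t$), not intertwining. Rigidity is stable under joining two rigid subgroups with infinite intersection and under weak quasi-normalizers, by Proposition~\ref{prop:rigidity_conditions}(iii),(iv). A corner intertwining $P\prec M_i$ does not by itself pass to the algebra generated together with a neighbouring generator.

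\textbf{How the paper fills this.} It covers $A_\Lambda$ by subgroups that are each nonamenable with infinite center. These are the edge groups $H_e$ for labels $\ge 3$, and, for each maximal star with at least three vertices, $H_v=\langle v,\Delta_e : e\ni v\rangle$ for a suitably chosen essential vertex $v$. Here $v$ is central in $H_v$. Nonamenability of $H_v$ (Lemma~\ref{lem:non amenable vertex}) needs real input: Crisp's result for $2$-dimensional triangles and Jankiewicz's for spherical ones. These subgroups are listed with infinite consecutive intersections (Proposition~\ref{prop:artin group decomposition}), so Proposition~\ref{prop:rigidity_conditions}(ii),(iii) makes $A_\Lambda$ rigid. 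The counting is then the Kurosh-type intertwining argument of Drimbe's Corollary 8.1, which is what your Step 2 aims at.

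Your edge-by-edge idea can also be repaired more cheaply, in the style the paper uses for Coxeter groups. Start from a nonamenable rigid seed:
\begin{itemize}
\item an edge group with label $\ge 3$; or,
\item when all labels are $2$, $\langle b\rangle\times\langle a,c\rangle\cong\mathbb{Z}\times\mathbb{F}_2$ for an induced path $a-b-c$.
\end{itemize}
Then absorb a vertex $w$ adjacent to an already absorbed vertex $s$. If the edge $sw$ has label $2$, use Proposition~\ref{prop:rigidity_conditions}(iv), since $w$ centralizes $s$. If its label is $\ge 3$, use (iii) with the rigid group $H_{sw}$. In either version, nonamenability has to enter at the seed, and your proposal never supplies it.
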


The situation for Coxeter groups is quite a bit more subtle and one cannot hope for the same result as above. This is because fundamental groups of hyperbolic surfaces can appear as free factors in which case it is annoying because it is another \emph{notorious} open problem of Voiculescu whether such groups have the same von Neumann algebra as the free groups. We can, however, prove the result when the assumption of non--relative hyperbolicity is added, which seems to geometrically be almost as good as one can get, owing to the above problem.

\begin{theorem}
\label{thm:Coxeter}
    Suppose $\Gamma_1$ and $\Gamma_2$ are finite simple graphs, and $W_{\Gamma_i}$ are the associated Coxeter groups with arbitrary choices of defining relations. Suppose in addition that each free factor of $W_{\Gamma_i}$ is not relatively hyperbolic and nonamenable. If $L(W_{\Gamma_1})\cong L(W_{\Gamma_2})$, then $\Gamma_1$ and $\Gamma_2$ have the same number of connected components. 
\end{theorem}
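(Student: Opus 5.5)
The plan is to identify the number of connected components with the number of free factors in a canonical free product decomposition. We then show that this number is a von Neumann algebraic invariant, using Ozawa's Kurosh-type rigidity for free products together with strong solidity/primeness input; concretely, Ioana's or Houdayer--Ueda's free product rigidity.

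\textbf{Step 1 (group side).} The Coxeter group $W_{\Gamma_i}$ splits as the free product $W_{\Gamma_i}=W_{\Lambda_1}*\cdots*W_{\Lambda_k}$ over the connected components $\Lambda_j$ of $\Gamma_i$, since there are no relations between vertices in different components. Hence $L(W_{\Gamma_i})=L(W_{\Lambda_1})*\cdots*L(W_{\Lambda_k})$. The free factors in the hypothesis are understood as these component groups $W_{\Lambda_j}$. By assumption each $W_{\Lambda_j}$ is infinite, nonamenable, and not relatively hyperbolic (in particular not hyperbolic).

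\textbf{Step 2 (von Neumann side).} I want each $M_j=L(W_{\Lambda_j})$ to be ``freely indecomposable'' in the sense required by Houdayer--Ueda's Kurosh-type theorem, namely that it is not a free product in the relevant sense. For this I would use the known dichotomy for Coxeter groups, due to Caprace and Behrstock--Hagen--Sisto. A Coxeter group $W_\Lambda$ with connected $\Lambda$ that is not relatively hyperbolic with respect to any collection of proper parabolic subgroups is \emph{thick}, or at least has a chain of nonamenable special subgroups $W_{T_1},\dots,W_{T_m}$ with infinite commuting pieces. These are subgroups of the form $W_A\times W_B$ with both factors infinite, together with a connectivity of such products covering the generating set. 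Such product subgroups give diffuse commuting subalgebras inside $M_j$. Popa's intertwining, combined with Ioana--Peterson--Popa and Houdayer--Ueda results on relative commutants in free products, then forces any subalgebra with diffuse commutant to embed into a single free factor. Chaining these embeddings through the connected thick structure shows that the whole $M_j$ embeds into one free factor of any other free product decomposition. This is precisely the indecomposability hypothesis in the Kurosh-type rigidity theorem (Houdayer--Ueda, or Ozawa's original for solid-type factors).

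\textbf{Step 3 (conclusion).} Given $L(W_{\Gamma_1})\cong L(W_{\Gamma_2})$, with both sides free products of factors satisfying Step 2, Kurosh-type rigidity yields a bijection between the free factors up to unitary conjugacy and amplification. In particular the number of components is the same. The nonamenability assumption is needed here: amenable factors would merge, as in Dykema's free group factor phenomenon. Nonamenability also excludes the finite or virtually abelian components, since an infinite amenable Coxeter group is virtually abelian.

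The main obstacle is Step 2. The difficulty is converting ``not relatively hyperbolic'' into a concrete chain of commuting infinite special subgroups, and then propagating the intertwining along the chain. I would first try to prove this for $2$-dimensional or right-angled-like pieces, and then invoke Behrstock--Hagen--Sisto's thickness for general Coxeter groups. A subtle point is that the commuting pieces must be infinite, and their centralizers large enough for the intertwining lemma to apply.
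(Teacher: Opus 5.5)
Your overall strategy is the paper's: make each component rigid for the free product deformation using the class $\T$, then run a Kurosh-type intertwining argument. But Step 2 rests on a structural claim that is false, and the cases it misses are where the actual work lies.

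Non-relative hyperbolicity does \emph{not} give nonamenable special subgroups $W_A\times W_B$, with both factors infinite, covering $S$. Take $S=\{x_1,x_2,y_1,y_2,c_1,c_2,c_3,s\}$ with $x_i\perp y_j$, $c_1,c_3\perp x_1,x_2$, $c_2\perp y_1,y_2$, $m(c_i,c_j)=3$, $s\perp c_1,c_2,c_3$, and all other pairs free. Start from $D_\infty\times D_\infty\in\T_0$ on $\{x_i,y_j\}$ and adjoin $c_1,c_2,c_3,s$ in turn by operation (2) of Definition~\ref{def:T}; this shows $(W,S)\in\T$. The graph is connected, and $W\supset W_{\{x_1,x_2,c_2\}}\cong(\mathbb Z/2)^{*3}$ is nonamenable. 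Now suppose $s\in A$ with $A\perp B$ and $B$ non-spherical. Then $B\subseteq s^\perp=\{c_1,c_2,c_3\}$, which is affine $\tilde A_2$ and hence minimal non-spherical, so $B=s^\perp$. This forces $A\subseteq B^\perp=\{s\}$, which is spherical. So every such product lies in the proper subgroup $W_{S\setminus\{s\}}$.

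The generator $s$ is reachable only because it commutes with the infinite but \emph{amenable} group $W_{s^\perp}$, i.e.\ because $s\in\mathcal N_{wq}(W_{S\setminus\{s\}})$. Your mechanism needs nonamenability, so it cannot see $s$. Indeed, ``any subalgebra with diffuse commutant embeds into a free factor'' is false as stated: $L(\langle ab\rangle)\subset L(\mathbb Z/2*\mathbb Z/3)$ is a counterexample. Likewise, members of $\T_0$ and intermediate stages of the construction can be amenable (affine groups, $D_\infty\times D_\infty$).

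The paper instead inducts along the construction of $\T$:
\begin{enumerate}[(a)]
\item a product of two infinite groups is rigid;
\item under operation (2) with $W_{S_0}$ nonamenable, $s$ lies in the wq-normalizer;
\item under operation (3), one uses generation by two rigid subgroups with infinite intersection, or the wq-normalizer if one side is amenable;
\item when $W_{S_0}$ (resp.\ both $W_{S_1},W_{S_2}$) is amenable, the non-spherical part of $s^\perp$ (resp.\ $S_1\cap S_2$) is an entire affine irreducible component. That component splits off as a direct factor of $W$, so $W$ is itself a product of two infinite groups.
\end{enumerate}

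Second, ``chaining these embeddings'' is not something you can do with $\prec$ as stated. Intertwining is a corner-wise statement, up to partial isometries, and passing it from $P_1,P_2$ to $P_1\vee P_2$ is not automatic; you give no argument for it. The paper avoids this by working with $\alpha_t$-rigidity, i.e.\ uniform convergence of the deformation of \cite{IPP08} on the unit ball. That property \emph{is} stable under generation with infinite intersection and under weak quasi-normalizers (Proposition~\ref{prop:rigidity_conditions}(iii),(iv)), and it is converted into intertwining only at the end, following Corollary 8.1 of \cite{Drimbe1}.

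Note also that $L(W_\Lambda)$ need not be a factor, e.g.\ when one generator commutes with all the others. So in Step 3 a Kurosh theorem stated for factors, such as Houdayer--Ueda's, cannot be quoted verbatim.
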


We suspect though that the above result should hold more generally when the nonamenable factors admit vanishing of the first $\ell^2-$Betti number, although we are not able to prove this at the moment. Note that the first $\ell^2-$Betti number does vanish for non--relatively hyperbolic Coxeter groups over connected graphs (see Remark \ref{first l2 betti rmk}). 

We point out additionally that the Artin groups and Coxeter groups appearing in Theorem~\ref{thm:Artin} and Theorem~\ref{thm:Coxeter} are precisely those that are thick in the sense of \cite{behrstock2009thick}.

It is interesting to investigate when Artin/Coxeter groups are measure equivalent, in our context of admitting many non trivial free components. While this is a much harder problem, we do obtain below a definitive result in case when the factors are specific types of RAAGs, namely, products of nonabelian free groups. The situation is quite a bit more subtle than the von Neumann algebra case, because it is possible to have measure equivalence, in fact even commensurability, among different numbers of free components. For instance, $(\mathbb{F}_2\times \mathbb{F}_2) \ast (\mathbb{F}_2\times \mathbb{F}_2)$ is in fact commensurable to $(\mathbb{F}_3\times \mathbb{F}_2)*(\mathbb{F}_2\times \mathbb{F}_2)*(\mathbb{F}_2\times \mathbb{F}_2)$, while clearly their von Neumann algebras are not isomorphic. However, as it is shown below, this turns out to be the only way to achieve measure equivalence in this setting. 

\begin{theorem}
     Let $G_1,G_2$ be two groups of the form $\overset{n}{\underset{i=1}{\ast}} (\F_{a_{i,1}}\times\cdots \times \F_{a_{i,k_i}})$. Assume that all of the integers $k_i,a_{i,j}$ are $\geq 2$. Then $G_1$ and $G_2$ are measure equivalent if and only if they are commensurable if and only if their $\ell^2-$Betti numbers are proportional.
\end{theorem}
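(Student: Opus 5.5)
The forward implications are standard. Commensurable groups are measure equivalent. By Gaboriau's theorem, measure equivalent groups have proportional $\ell^2$-Betti numbers. So the real content is: \emph{proportional $\ell^2$-Betti numbers imply commensurability}.

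\textbf{Computing the $\ell^2$-Betti numbers.} For $P_i=\F_{a_{i,1}}\times\cdots\times\F_{a_{i,k_i}}$, the Künneth formula gives that $\beta_p(P_i)$ vanishes except for $\beta_{k_i}(P_i)=\prod_j(a_{i,j}-1)$. For a free product we have $\beta_1(\ast P_i)=n-1+\sum_i\beta_1(P_i)$ and $\beta_p(\ast P_i)=\sum_i\beta_p(P_i)$ for $p\ge 2$. Since all $k_i\ge 2$, for $G=\ast_{i=1}^n P_i$ this yields
\[
\beta_1(G)=n-1,\qquad \beta_k(G)=\sum_{i:\,k_i=k}\ \prod_j(a_{i,j}-1)\quad (k\ge2).
\]
More generally, for a group $H=F_r\ast \ast_{l=1}^{m}Q_l$ with each $Q_l$ a product of $\ge 2$ nonabelian free groups, the same computation gives $\beta_1(H)=m-1+r$ and $\beta_k(H)=\sum_{l:\text{$Q_l$ has $k$ factors}}\beta_k(Q_l)$.

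\textbf{Constructing finite-index subgroups.} I will use the Kurosh subgroup theorem together with residual finiteness. Suppose we are given a finite group $Q$ and homomorphisms $\phi_i\colon P_i\to Q$. These extend to $\phi\colon G\to Q$, and $K=\ker\phi$ decomposes as
\[
K=\ast_i\ \ast_{|Q|/|\phi_i(P_i)|}\big(\ker\phi_i\big)^{g}\ \ast\ F_r .
\]
The rank $r$ is determined by the Euler-characteristic/$\beta_1$ count, $r=\beta_1(K)-(\#\text{factors})+1$, with $\beta_1(K)=|Q|(n-1)$. Each $\ker\phi_i$ is a finite-index subgroup of a product of free groups. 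It therefore contains a finite-index subgroup of the form $\F_{c_1}\times\cdots\times\F_{c_{k_i}}$, which we may take to be $\ker\phi_i$ after enlarging $Q$.

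The key flexibility has two parts.
\begin{enumerate}[(a)]
\item Since all nonabelian free groups are commensurable, every $k$-fold product is commensurable to $\F_2^{k}$. By choosing the $\phi_i$ appropriately we can arrange that every factor of degree $k$ in $K$ is isomorphic to one fixed standard group $S_k=\F_{c}^{k}$, where $c$ is chosen so that $(c-1)^k$ is divisible by all the relevant $\beta_k(P_i)$-ratios.
\item By varying $|\phi_i(P_i)|$ relative to $|Q|$, we control how many conjugate copies of each factor appear.
\end{enumerate}
So, after passing to a finite-index subgroup, $G$ is isomorphic to a normal form
\[
F_r\ast\ \ast_k S_k^{\ast N_k},
\]
determined by the data $(r,(N_k)_k)$. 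In this form $\beta_k=N_k\beta_k(S_k)$ and $\beta_1=\sum_kN_k-1+r$.

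\textbf{Matching $G_1$ and $G_2$.} Suppose $\beta_\ast(G_1)=\lambda\,\beta_\ast(G_2)$ with $\lambda=p/q$ rational; $\lambda$ is rational because $\beta_1(G_i)=n_i-1>0$. Pass to subgroups of index $qM$ in $G_1$ and $pM$ in $G_2$ for a common large $M$. The resulting $\ell^2$-Betti numbers are then equal, and both subgroups are taken in normal form with the same standard groups $S_k$. Equality of $\beta_k$ forces equal $N_k$, and equality of $\beta_1$ then forces equal $r$. Hence the two subgroups are isomorphic, and $G_1,G_2$ are commensurable.

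\textbf{Main obstacle.} The hard step is the simultaneous arithmetic realization. We need a single finite quotient $Q$, with images $\phi_i(P_i)$ of prescribed orders, such that
\begin{itemize}
\item every $\ker\phi_i$ is exactly $S_{k_i}$, which requires the index $[P_i:\ker\phi_i]$ to equal $\beta_{k_i}(S_{k_i})/\beta_{k_i}(P_i)$ and to be realized by a quotient of $P_i$ with kernel a product of free groups, and
\item the total index is the prescribed $qM$ or $pM$.
\end{itemize}
I would first try to achieve this factor by factor. Take quotients $P_i\to\prod_j \mathbb{Z}/m_{ij}$ that are cyclic on each free factor, so the kernel is a product of free groups of computable ranks. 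Then embed all these quotients into one large symmetric group $Q$ by choosing permutation representations with the right orbit sizes. The remaining freedom, namely the free rank $r$ and the copy multiplicities $N_k$, is adjusted last using the $\beta_1$ identity.
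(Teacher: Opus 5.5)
There is a genuine gap. Everything before your ``main obstacle'' is fine, but that obstacle is the entire content of the hard direction, and the plan you sketch for it does not work.

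With $K=\ker(\phi\colon G\to Q)$ you have $[G:K]=|\phi(G)|$, the order of the finite group generated by the images $\phi(P_i)$. Once you impose $\ker\phi_i\cong\mathbb{F}_c^{k_i}$, multiplicativity of $\beta_{k_i}$ forces $|\phi_i(P_i)|=(c-1)^{k_i}/\beta_{k_i}(P_i)$. Then $N_k=|\phi(G)|\,\beta_k(G)/(c-1)^k$ and $r$ are forced as well. So your item (b), and the closing remark about adjusting $r$ and $N_k$ ``last'', refer to no actual freedom.

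The only remaining parameter is $|\phi(G)|$, and you need $p\,|\phi_1(G_1)|=q\,|\phi_2(G_2)|$ exactly. Embedding the quotients $\prod_j\mathbb{Z}/m_{ij}$ into one symmetric group $S_N$ and taking the kernel gives no handle on this number. The subgroup of $S_N$ generated by the images is whatever it turns out to be, in general enormous (often $A_N$ or $S_N$), not a group of prescribed order $qM$.

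The missing idea, which is what the paper uses, is to take a point stabilizer rather than a kernel: its index is the number of points, which you choose. Let $x-1$ be a common multiple of all the $a_{i,j}-1$ in both groups. Then $X_m=\mathbb{F}_x^m$ sits in each $P_i$ of dimension $m$ with index $h_i=(x-1)^m/\beta_m(P_i)$. For $N$ a multiple of all $h_i$:
\begin{enumerate}[(1)]
\item let $P_i$ act on $N$ points as $N/h_i$ disjoint copies of its coset action on $P_i/X_{k_i}$;
\item relabel so that the combined $G$-action is transitive, and let $H$ be a point stabilizer.
\end{enumerate}
Then $[G:H]=N$ exactly. The Bass--Serre quotient graph gives $H\cong\ast_m X_m^{\ast V^{(m)}}\ast\mathbb{F}_R$, with $V^{(m)}=N\beta_m(G)/(x-1)^m$ (the number of $P_i$-orbits) and $R=N(n-1)-\sum_m V^{(m)}+1$. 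Taking $N_1=M(n_2-1)$ and $N_2=M(n_1-1)$, with $M$ a common multiple of all the $h_i$, makes these data agree for $G_1$ and $G_2$.

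You could also rescue your kernel approach. Take $Q=\prod_i\phi_i(P_i)$ to get normal-form subgroups $K_1,K_2$ of some indices. Then equalize the indices by passing to the kernel of a surjection $K_j\to\mathbb{F}_{r_j}\to\mathbb{Z}/d_j$ that kills every $S_k$-factor; by Kurosh this kernel is again in normal form, of index $d_j$. This needs $r_j\ge 1$, which holds when $n_j\ge 2$ and $c$ is large.

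Finally, your rationality argument assumes $n_i\ge 2$. The case $n_1=1$, which forces $n_2=1$, must be handled separately, e.g.\ via a common finite-index subgroup $\mathbb{F}_c^{k}$.
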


It is quite easy to compute the $\ell^2-$Betti numbers of the above groups, thanks to the Kunneth formula and additivity under free products. Note that by a result of Gaboriau \cite{Ga02bis}, if two groups are measure equivalent, then their $\ell^2-$Betti numbers are proportional. The above class of examples was considered in the work of Drimbe and Vaes \cite{DriVa25}, in the framework of von Neumann equivalence \cite{IsPeRu19}. We point out that proportionality of $\ell^2-$Betti numbers of von Neumann equivalent groups remains open.  It is a very interesting question whether one could prove that such groups are von Neumann equivalent if and only if they are commensurable. We could not settle this problem at the moment.

 \subsection*{Acknowledgements} We thank University of Maryland and the Ohio State University for hosting visits of the authors, which lead to the completion of this work. We thank C. Ding, A. Ioana, G. Patchell and L. Robert for helpful conversations. 

\section{Some Notation}
Let $\Gamma$ be a finite simple graph with vertex set $V\Gamma$ and edge set $E\Gamma$, with each of its edges labeled by an integer $\ge 2$. A subgraph $\Gamma'$ of a graph $\Gamma$ is \emph{full}, if whenever two vertices of $\Gamma'$ are adjacent in $\Gamma$, then any edge of $\Gamma$ between these two vertices are contained in $\Gamma'$. 

Given a vertex $v\in V\Gamma$, the \textit{link} of $v$, denoted $\lk(v)$, is the set of vertices in $\Gamma$ that are adjacent to $v$. The \textit{star centered at $v$}, denoted $\st(v)$ is defined as $\st(v)=\{v\}\cup\lk(v)$ and the \emph{closed star} of $v$, denoted $N_\Gamma(v)$, is the full subgraph of $\Gamma$ spanned by $v$ and all of the vertices that are adjacent to $v$. We say that a vertex $v\in V\Gamma$ is \textit{essential} if the closed star of $v$ is not properly contained in the closed star of $w$ for a vertex $w\neq v$. A star is \textit{maximal} if it is centered at an essential vertex. Every vertex $v\in V\Gamma$ is contained in some maximal star. 

Given two paths $P_1$, $P_2$ of $\Gamma$ such that $P_1$ ends at the same vertex that $P_2$ starts, we denote by $P_1;P_2$ the concatenation of $P_1$ and $P_2$. Given a set of vertices $V\subset V\Gamma$, we denote by $\Gamma_V$ the full subgraph of $\Gamma$ spanned by $V$ and by $H_V$ the subgroup of $A_\Gamma$ generated by the vertex groups in $V$.

Given two letters $a,b$ and positive integer $m\ge 2$, let $W(a,b,m)$ be the alternating word of length $m$ starting with $a$. 
The \emph{Artin group with defining graph $\Gamma$}, denoted $A_{\Gamma}$, is given by the following presentation:
\begin{center}
	$\langle s_i\in V\Gamma\ |\ W(s_i,s_j,m_{ij})=W(s_j,s_i,m_{ij})$ for each $s_i$ and $s_j$ spanning an edge labeled by $m_{ij}\rangle$.
\end{center}
The \emph{Coxeter group with defining graph $\Gamma$}, denoted $W_{\Gamma}$, is given by the following presentation:
\begin{center}
	$\langle s_i\in V\Gamma\ |\ s_i^2=e, W(s_i,s_j,m_{ij})=W(s_j,s_i,m_{ij})$ for each $s_i$ and $s_j$ spanning an edge labeled by $m_{ij}\rangle$
\end{center}
An Artin or Coxeter group is \emph{right angled} if $W(s_i,s_j,m_{ij})=2$ for all $i,j$. 

Suppose $W$ is a Coxeter group with generating set $S=\{s_1,...,s_n\}$. The pair $(W,S)$ is called a \emph{Coxeter system}. Given a subset $A\subset S$, we denote by $W_A$ the subgroup of $W$ generated by $A$. The Coxeter system $(W_A,A)$ \emph{irreducible} if there is no partition of $A$ into two nonempty, disjoint commuting subsets, \emph{spherical} if $W_A$ is finite, and \emph{minimal non-spherical} if $(W_A,A)$ is non-spherical but $(W_B,B)$ is spherical for every proper subset $B\subset A$.
We say that a subset $A\subset S$ is \emph{irreducible} (resp. \emph{spherical}, \emph{minimal non-spherical}) if $(W_A,A)$ is \emph{irreducible} (resp. \emph{spherical}, \emph{minimal non-spherical}). 
Given $s\in S$, we denote by $s^\perp$ the set of elements in $S\setminus\{s\}$ commuting with $s$, and given two disjoint, commuting subsets $A,B\subset S$, we denote by $(W_A,A)\times(W_B,B)$ the Coxeter system $(W_{A\cup B}, A\cup B)$. Every Coxeter system $(W,S)$ admits a natural decomposition into its \emph{irreducible components}, by which we mean $(W,S)=(W_1,S_1)\times...\times(W_n,S_n)$ where the $W_i$ are irreducible.

We say a Coxeter group is \emph{affine}, if it admits a faithful proper and cocompact isometric action on some Euclidean space such that each generator acts as a Euclidean reflection. Irreducible affine Coxeter groups and irreducible spherical Coxeter groups are classified, see \cite{MR1890629}, or \cite[Table 6.1]{Davisbook}. As an immediate consequence of this classification, we have:
\begin{lemma}
\label{lem:minimal}
Each irreducible affine Coxeter group is minimal non-spherical.
\end{lemma}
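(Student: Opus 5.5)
We need to show that every irreducible affine Coxeter group $(W,S)$ is minimal non-spherical: $W$ is infinite, and every proper standard parabolic subgroup $W_B$ with $B\subsetneq S$ is finite.

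The first half is immediate from the definition. An affine Coxeter group acts properly and cocompactly on some $\mathbb{E}^n$. Here $n\ge 1$, since an irreducible affine system is by convention not the trivial group; in any case the classification lists only the types $\tilde A_n,\tilde B_n,\tilde C_n,\tilde D_n,\tilde E_{6,7,8},\tilde F_4,\tilde G_2$ with $n\ge1$. A group acting properly and cocompactly on $\mathbb{E}^n$ with $n\ge1$ is infinite.

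For the second half, the plan is to use the classification in \cite[Table 6.1]{Davisbook} directly. Each irreducible affine diagram $\tilde X_n$ has $n+1$ vertices. Deleting any one vertex produces a diagram that is a disjoint union of diagrams from the spherical list ($A_k,B_k,D_k,E_{6,7,8},F_4,G_2,H_3,H_4,I_2(m)$). Indeed, removing the "extra" node gives $X_n$ itself, and removing any other node gives standard finite pieces. Every proper subset $B$ is contained in some $S\setminus\{s\}$, and a standard parabolic subgroup of a finite group is finite. Hence it suffices to check maximal proper subsets, one type at a time:

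\begin{itemize}
\item $\tilde A_n$ is a cycle, and removing a vertex leaves the path $A_n$.
\item $\tilde A_1$ has the two generators joined by an edge labeled $\infty$ (no relation), and each singleton is finite.
\item For $\tilde B_n,\tilde C_n,\tilde D_n$, deletion yields products of $A$, $B$ and $D$ types.
\item For $\tilde E_n,\tilde F_4,\tilde G_2$, deletion yields products of $A$, $D$ and $E$ types, or $B_3\times A_1$, $C_3$, $F_4$, $A_2$, $G_2$ and the like.
\end{itemize}

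This is a finite, routine inspection.

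A more conceptual route avoids the case check. Let $W$ act on $\mathbb{E}^n$ with the generators acting as reflections in the walls of a fundamental alcove, which is a simplex since $W$ is irreducible. Then $W_B$ for $B\subsetneq S$ fixes the vertex of the simplex opposite the omitted walls. So $W_B$ lies in a point stabilizer, which is finite by properness. I would mention this as an alternative, but the paper explicitly frames the lemma as a consequence of the classification. The only mild obstacle is bookkeeping across the types, together with the degenerate case $\tilde A_1$, which must be handled with the $\infty$ label convention.
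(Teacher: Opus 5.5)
Your proposal is correct and follows the paper's route: the paper simply records the lemma as an immediate consequence of the classification of irreducible affine and spherical Coxeter diagrams, which is exactly the vertex-deletion inspection you carry out. Your enumeration for $\tilde F_4$ and $\tilde G_2$ is loose (e.g.\ $\tilde F_4$ also gives $B_4$, $A_2\times A_2$, $A_3\times A_1$, and $\tilde G_2$ gives $A_1\times A_1$), but every deletion is indeed spherical, and your alcove argument is a valid classification-free alternative once one grants that $S$ acts by reflections in the walls of a simplicial fundamental chamber.
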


We record the following which is a consequence of \cite[Proposition 17.2.1]{Davisbook}.
\begin{lemma}
\label{lem:Coxeter amenable}
    Let $(W_S,S)$ be a Coxeter system. Then $W_S$ is amenable if and only if each of its irreducible components is either affine or spherical. 
\end{lemma}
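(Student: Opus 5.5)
The plan is to deduce Lemma~\ref{lem:Coxeter amenable} from \cite[Proposition 17.2.1]{Davisbook} together with the product decomposition into irreducible components. Write $(W_S,S)=(W_1,S_1)\times\cdots\times(W_n,S_n)$. Since $W_S$ is the direct product $W_1\times\cdots\times W_n$, amenability of $W_S$ is equivalent to amenability of every factor $W_i$. One direction holds because quotients (or subgroups) of amenable groups are amenable; the other holds because finite direct products of amenable groups are amenable. This reduces the statement to the irreducible case: an irreducible Coxeter group is amenable if and only if it is spherical or affine.

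For the ``if'' direction in the irreducible case, a spherical $W_i$ is finite, hence amenable. An affine $W_i$ acts properly and cocompactly by isometries on some $\mathbb{R}^d$. By Bieberbach's theorem it is then virtually $\mathbb{Z}^d$; more directly, it is a discrete cocompact subgroup of $\mathrm{Isom}(\mathbb{R}^d)=\mathbb{R}^d\rtimes O(d)$, and its translation subgroup has finite index. It is therefore virtually abelian, hence amenable.

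For the ``only if'' direction, I use the dichotomy recorded in \cite[Proposition 17.2.1]{Davisbook}: an irreducible Coxeter group that is neither spherical nor affine contains a nonabelian free subgroup. Since amenability passes to subgroups and $\mathbb{F}_2$ is not amenable, such a group is nonamenable. Hence any amenable irreducible $W_i$ must be spherical or affine.

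The only real obstacle is this last step, the Tits-alternative–type statement, and I would simply cite it from \cite{Davisbook}. If I had to prove it, I would try to show that the cosine (Tits) bilinear form of a non-spherical, non-affine irreducible system is indefinite. Then, via the Tits representation on the Tits cone, I would produce two hyperbolic-type elements and play ping-pong with them to obtain $\mathbb{F}_2$. The remaining points are routine bookkeeping: the direct-product decomposition is a genuine group direct product, since the components commute and generate $W_S$ with trivial pairwise intersections, and ``affine'' for a component is understood with its own generating set $S_i$.
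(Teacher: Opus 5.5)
Your proposal is correct and takes the same route as the paper, which justifies the lemma only by citing Proposition 17.2.1 of Davis's book. You additionally spell out the routine steps the paper leaves implicit: the reduction to irreducible components through the direct-product decomposition, finite groups being amenable, and affine groups being virtually abelian by Bieberbach.
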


\section{$\alpha_t$-rigid groups}

Here we will describe a family of countable groups which we call $\alpha_t$--rigid. The motivation of this comes from Popa's deformation/rigidity theory \cite{Po07B}, in particular in the context of the malleable deformations associated to a free product. The proof of the von Neumann algebraic side of the result is in fact well known to experts. The result we aim for is known as ``Kurosh-type rigidity'', of which now there are plenty \cite{Oza06,  Houdayer_2016, DingSri, Drimbe1}, etc. The main input in the argument is that the connected components are $\alpha_t$--rigid, i.e \emph{rigid} with respect to the standard mixing malleable deformation arising from free products \cite{IPP08}. Then the proof follows identically the intertwining arguments in Corollary 8.1 in \cite{Drimbe1}. 

Now we document some examples of groups whose von Neumann algebras will be $\alpha_t$--rigid when they embed into free products endowed with the standard mixing s-malleable deformation.

\begin{proposition}\label{prop:rigidity_conditions}
\begin{enumerate}[(i)]
    \item[]
    \item If $G$ is nonamenable and is the direct sum of two infinite groups, then $G$ is $\alpha_t$--rigid \cite[Lemma 19.1.3]{AP}.
    \item If $G$ is nonamenable and $G$ has infinite center, then $G$ is $\alpha_t$--rigid \cite[Lemma 19.1.3]{AP}.
    \item If $H_1, H_2<G$  are such that $H_i$ are $\alpha_t$--rigid, $H_1,H_2$ generate $G$ and $|H_1\cap H_2|=\infty$ then $G$ is $\alpha_t$--rigid \cite[Theorem 1.3]{Pineapple}.
    \item If $H<G$ is $\alpha_t$--rigid, then $\mathcal{N}_{wq}(H)$ is $\alpha_t$--rigid where $\mathcal{N}_{wq}(H)$ is the group generated by the set of elements $g\in G$ such that $gHg^{-1}\cap H$ is infinite \cite[Theorem 1.6]{Pineapple}. 
\end{enumerate}

\end{proposition}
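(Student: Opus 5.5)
This proposition is a compilation of known results, so the proof is mainly a matter of fixing the definition and checking that each cited theorem applies to it. I would first make precise what $\alpha_t$--rigid means. For a countable group $G$, it means that whenever $L(G)\subseteq pMp$ for a tracial free product $M=M_1\ast M_2$ (or amplification) with the free malleable deformation $\alpha_t$ of \cite{IPP08}, the deformation $\alpha_t$ converges uniformly to the identity on the unit ball of $L(G)$. Equivalently, by Popa's transfer of rigidity, $L(G)$ intertwines into some $M_i$ inside $M$. With this definition fixed, each item reduces to the cited statement.

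\emph{Items (i) and (ii).} Here the key input is the spectral gap argument of \cite[Lemma 19.1.3]{AP}: for a nonamenable subalgebra $Q\subseteq pMp$, $\alpha_t$ converges uniformly on the unit ball of $Q'\cap pMp$. This works because the orthocomplement bimodule of the deformation is weakly contained in the coarse bimodule, while $Q$ has no amenable direct summand.
\begin{itemize}
\item In case (i), write $G=G_1\times G_2$ with both factors infinite. At least one factor, say $G_1$, is nonamenable, so $\alpha_t$ converges uniformly on $L(G_2)\subseteq L(G_1)'\cap M$. If $G_2$ is also nonamenable, the symmetric argument gives uniform convergence on $L(G_1)$, and combining the two gives uniform convergence on $L(G)$. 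If $G_2$ is amenable, I would invoke the full lemma of \cite{AP}, which treats a commuting pair $(Q,Q')$ with $Q$ nonamenable and $Q'$ diffuse and concludes rigidity of their join.
\item Case (ii) is the instance where $Q'$ is $L(Z(G))$, which is diffuse because the center is infinite, and the nonamenable $Q$ is $L(G)$ itself.
\end{itemize}

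\emph{Item (iii).} This is exactly \cite[Theorem 1.3]{Pineapple}. The mechanism is as follows. Rigidity of $L(H_i)$ gives intertwining $L(H_i)\prec M_{j_i}$. The infinite intersection $H_1\cap H_2$ makes $L(H_1\cap H_2)$ diffuse, and because free products are mixing and freely malleable, this forces $j_1=j_2$ and a common unitary conjugating both algebras into the same corner. That in turn yields uniform convergence of $\alpha_t$ on the algebra generated by $L(H_1)$ and $L(H_2)$, which is $L(G)$.

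\emph{Item (iv).} This is \cite[Theorem 1.6]{Pineapple}. For $g$ with $gHg^{-1}\cap H$ infinite, rigidity transfers from $L(H)$ to $u_gL(H)u_g^*$, and the two share a diffuse subalgebra. Iterating (iii) along generators therefore handles the group generated by such $g$. The one subtlety is that $\mathcal N_{wq}(H)$ may require infinitely many steps of generation. Those steps are handled by the fact that uniform convergence passes to the von Neumann algebra generated by an increasing union, provided the convergence rate is controlled. The control comes from intertwining into a fixed corner, which is exactly what \cite{Pineapple} proves.

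\textbf{Main obstacle.} The one genuinely delicate point is (iii)/(iv): gluing the rigidity of two subalgebras with diffuse intersection into rigidity of the algebra they generate. Uniform convergence is not obviously preserved under generating a larger algebra, so one has to pass through intertwining and the mixing property of $\alpha_t$. I would rely on \cite{Pineapple} for this step rather than reprove it.
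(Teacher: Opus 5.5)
\textbf{Overall.} The paper gives no argument for this proposition beyond the references. Your reliance on \cite{AP} and \cite{Pineapple} therefore matches it, and your sketches of (i)--(iii) are consistent with the standard spectral-gap and mixing arguments. However, the mechanism you propose for (iv) fails.

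\textbf{The gap in (iv).} Applying (iii) to $H$ and $gHg^{-1}$, even iterated over many such $g$, only shows that the subgroup generated by $H$ and its conjugates $gHg^{-1}$ is $\alpha_t$--rigid. That subgroup need not contain the elements $g$ themselves. If $g$ normalizes $H$, then $gHg^{-1}=H$ and (iii) yields nothing, although $g\in\mathcal N_{wq}(H)$. This is exactly the situation in the paper's Coxeter application of (iv). Take $S=S_0\cup\{s\}$ with every edge at $s$ carrying an even label. Then $s\mapsto 1$, $S_0\mapsto 0$ defines a homomorphism $W\to\mathbb Z/2$ whose kernel contains $\langle W_{S_0}, sW_{S_0}s\rangle$. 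So no iteration of (iii) ever reaches $s$.

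\textbf{The missing idea.} What is needed is control of (weak) quasi-normalizers in free products via mixing, as in \cite{IPP08}. Suppose $uL(H)u^*\subseteq M_i$. The diffuse algebra $D=L(H\cap g^{-1}Hg)\subseteq L(H)$ satisfies $u_gDu_g^*\subseteq L(H)$. Hence $uu_gu^*$ conjugates the diffuse subalgebra $uDu^*\subseteq M_i$ back into $M_i$. Since $L^2(M)\ominus L^2(M_i)$ is a multiple of the coarse $M_i$-bimodule, this forces $uu_gu^*\in M_i$. Thus every such $g$, and hence all of $\mathcal N_{wq}(H)$, lands in the same conjugate of $M_i$ simultaneously, with no iteration at all.

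\textbf{What the real subtlety is.} The ``infinitely many steps'' issue you flag is not the real difficulty. The genuine technical point, which the cited result must handle, is that rigidity a priori yields only $L(H)\prec_M M_i$. That is, one gets partial intertwiners on corners rather than an honest unitary conjugacy. The same caveat applies to your ``common unitary'' step in (iii). It also means the ``equivalently'' in your definition is too strong: uniform convergence of $\alpha_t$ implies intertwining into some $M_i$, but not conversely in general.
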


Owing to this, it will be enough to prove that each of the free components of the groups in our theorems are $\alpha_t$--rigid in the above sense, to derive the main results. 

\section{Artin groups}

\subsection{An algebraic decomposition}

First we treat the right angled case as an illuminating example.

\begin{proposition}
    \label{prop:raag decomposition}
    Suppose $A_\Gamma$ is a right angled Artin group whose defining graph is connected and not complete. Then there exists a finite family of subgroups $H_i\subset A_\Gamma$, $i\in I$ such that the following conditions hold: 
    \begin{enumerate}
        \item $H_i$ is nonamenable for each $i\in I$.
        \item $H_i\cap H_{i+1}$ is infinite.
        \item $H_i\cong A\times B$ where $A,B$ are infinite groups.
        \item $A_\Gamma$ is generated by $\{H_i\}_{i\in I}$.
    \end{enumerate}
\end{proposition}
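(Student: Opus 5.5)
Take the subgroups $H_i$ to be the subgroups generated by the maximal stars of $\Gamma$, ordered along paths in $\Gamma$ so that consecutive ones share vertices. Since $\Gamma$ is finite, connected and not complete, every vertex $v$ has nonempty link. For a vertex $v$, the full subgraph on $\st(v)$ is the join of $\{v\}$ with $\lk(v)$, so
\[
H_{\st(v)} \cong \mathbb{Z}\times A_{\lk(v)},
\]
where $A_{\lk(v)}$ is the right angled Artin group on the full subgraph spanned by $\lk(v)$; this uses the standard fact that full subgraphs give subgroups isomorphic to the corresponding RAAG. Both factors are infinite, which gives (3). The subgroup $H_{\st(v)}$ is nonamenable exactly when $\lk(v)$ is not a clique, because a RAAG is amenable iff it is free abelian iff its graph is complete. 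So the whole point is to choose the centers $v$ so that their links are not cliques.

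\textbf{Key step (choice of stars).} Take the maximal stars, i.e.\ stars centered at essential vertices. Suppose $v$ is essential and $\lk(v)$ is a clique. Then $\st(v)$ is a clique $K$. Since $\Gamma$ is connected and not complete, some vertex $w\in\lk(v)$ has a neighbor $u\notin \st(v)$; if no such $w$ existed, $K$ would be a whole component, hence all of $\Gamma$, which is complete. Then $\st(v)=K\subset \st(w)$ and $u\in\st(w)\setminus\st(v)$, so the containment is proper. This contradicts essentiality, so $\lk(v)$ is not a clique for every essential $v$. (Strictly, the definition is phrased with closed stars, i.e.\ full subgraphs, but containment of vertex sets is what matters here.) Hence each maximal star subgroup is nonamenable, giving (1).

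\textbf{Generation and overlaps.} Every vertex lies in some maximal star, as recorded in the notation section, so the maximal star subgroups generate $A_\Gamma$, giving (4). For (2), build the graph $\mathcal{G}$ whose vertices are the maximal stars, with two stars adjacent when they share a vertex. Two stars sharing a vertex $x$ intersect in a subgroup containing $\langle x\rangle\cong\mathbb{Z}$, which is infinite.

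I expect the main obstacle to be showing that $\mathcal{G}$ is connected. For this, take an edge $xy$ of $\Gamma$; the star $\st(x)$ contains both $x$ and $y$ and lies inside some maximal star. Consequently, for any two maximal stars, a path in $\Gamma$ between vertices of each passes through a chain of maximal stars in which consecutive ones share a vertex. Finally, list the stars along a walk in $\mathcal{G}$ visiting all of them, with repetitions allowed, and index this list by $I=\{1,\dots,N\}$. Then consecutive $H_i, H_{i+1}$ share a vertex generator, so $H_i\cap H_{i+1}$ is infinite, which gives (2).
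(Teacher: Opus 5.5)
Your proof is correct and follows essentially the same route as the paper. Both arguments take the subgroups generated by maximal stars and show that an essential vertex cannot have a complete closed star, since a neighbor outside it would give a strictly larger star. Both then connect the maximal stars through a path in $\Gamma$, list them along a walk, and use $H_{\st(v)}\cong\langle v\rangle\times H_{\lk(v)}$ for (3). The differences are cosmetic: you find $u\notin\st(v)$ directly from connectivity instead of taking the last vertex of a path, and you only use that consecutive intersections contain some $\langle x\rangle\cong\mathbb{Z}$ rather than identifying them as $H_{\st(w_i)\cap\st(w_{i+1})}$.
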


\begin{proof}
    Let $\mathcal{S}$ be the collection of maximal stars in $\Gamma$. Given a maximal star $S\in\mathcal{S}$, we may pick a representative essential vertex $v_S\in S$ such that $S=\st(v_S)$. Let $\mathcal{V}=\{v_S:S\in\mathcal{S}\}$ be the collection of representative vertices and let $\mathcal{H}=\{H_{\st(v)}\}_{v\in \mathcal{V}}$ be the family of subgroups of $A_\Gamma$ generated by maximal stars of $\Gamma$. We show every $H\in\mathcal{H}$ satisfies condition (1). It is sufficient to show that if $v$ is essential, then $N_\Gamma(v)$ is not complete. Indeed, in this case, there exist two vertices in $N_\Gamma(v)$ that are not connected by an edge, and hence generate a non-abelian free group, from which it follows that $H_{\st(v)}$ is nonamenable. Suppose $N_\Gamma(v)$ were complete. Since $\Gamma$ is not complete, we can pick $w\in V\Gamma\setminus\st(v)$. Let $(v=v_1,v_2,...,v_m=w)$ be a path from $v$ to $w$, and let $v_i$ be the last vertex in the path contained in $\st(v)$. Then $\st(v)\cup\{v_{i+1}\}\subset\st(v_i)$ since $N_\Gamma$ is complete, but $v_{i+1}\not\in\st(v)$, so $\st(v)$ is not maximal.

    We now construct a sequence $\{H_i\}_{i=1}^n$ of groups in $\mathcal{H}$ such that $H_i\cap H_{i+1}$ is infinite and $\{H_i\}_{i=1}^n$ generates $A_\Gamma$. Consider a new graph $\Gamma'$ with vertex set $V\Gamma'=\mathcal{V}$. For $v,w\in V\Gamma'$, let $(v,w)$ be an edge of $\Gamma'$ if $\st(v)\cap\st(w)\neq\varnothing$ (where $\st(v)$ and $\st(w)$ are stars in $\Gamma$). We claim $\Gamma'$ is connected. Indeed, let $v,w\in V\Gamma'$ be two distinct vertices, and let $(v=v_1,v_2,...,v_m=w)$ be a path from $v$ to $w$ in $\Gamma$. For each $v_i$, let $w_i$ be an essential vertex such that $\st(v_i)\subset\st(w_i)$. Since $v_i\in\st(v_i)\cap\st(v_{i+1})\subset\st(w_i)\cap\st(w_{i+1})$, $(w_i,w_{i+1})\in E\Gamma'$ for every $1\le i\le m-1$. Then $(v=w_1,w_2,...,w_m=w)$ is a walk in $\Gamma'$ from $v$ to $w$, so $\Gamma'$ is connected.

    Now let $\{v_i\}_{i=1}^k$ be an enumeration of $\mathcal{V}$. For $1\le i\le k-1$, let $P_i=(v_i=w_{i,1},w_{i,2},...,w_{i,m}=v_{i+1})$ be a path from $v_i$ to $v_{i+1}$ in $\Gamma'$, and let $P=P_1;P_2;...;P_k$ be the concatenation of the $P_i$. Suppose $P=(v_1=w_1,w_2,...,w_n=v_n)$ and set $H_i=H_{\st(w_i)}$ where $\st(w_i)$ is the star in $\Gamma$ centered at the $i$th vertex of $P$. Then $H_i\cap H_{i+1}=H_{\st(w_i)\cap\st(w_{i+1})}$ is infinite since $\st(w_i)\cap\st(w_{i+1})\neq\varnothing$. It follows that $\{H_i\}_{i=1}^n$ is a sequence of groups in $\mathcal{H}$ such that $H_i\cap H_{i+1}$ is infinite, so the sequence $\{H_i\}_{i=1}^n$ satisfies condition (2). Since every vertex of $\Gamma$ is contained in the star of some essential vertex and $P$ contains all essential vertices, it follows that $A_\Gamma$ is generated by $\{H_i\}_{i=1}^n$, so $\{H_i\}_{i=1}^n$ satisfies condition (4).
    
    Finally, since $H_{\{w_i\}}$ commutes with $H_{\lk(w_i)}$, $H_i=H_{\st(w_i)}\simeq H_{\{w_i\}}\times H_{\lk(w_i)}$, so $\{H_i\}_{i=1}^n$ satisfies condition (3).
\end{proof}

We now move on to the case of general Artin groups.

\begin{theorem}(\cite{lek})
	\label{thm:lek}
	Let $\Gamma_1$ be a full subgraph of $\Gamma$ with the induced edge labeling. Then the natural homomorphism $A_{\Gamma_1}\to A_\Gamma$ is injective.
\end{theorem}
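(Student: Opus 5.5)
We will prove injectivity by producing a left inverse at the level of Salvetti complexes, following the retraction strategy of Godelle--Paris rather than van der Lek's original hyperplane-complement argument.

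\textbf{Setup.} Recall that $A_\Gamma$ is the fundamental group of the Salvetti complex $X_\Gamma$. This complex has one vertex and one cube-like cell $e_T$ for each spherical subset $T\subset V\Gamma$, with attaching maps read off from the Coxeter group $W_T$. Since $\Gamma_1$ is a full subgraph with the induced labels, a subset $T\subset V\Gamma_1$ is spherical in $\Gamma_1$ if and only if it is spherical in $\Gamma$. Hence $X_{\Gamma_1}$ is naturally a subcomplex of $X_\Gamma$, and the inclusion induces the natural homomorphism $A_{\Gamma_1}\to A_\Gamma$. It therefore suffices to show that a loop in $X_{\Gamma_1}$ which is null-homotopic in $X_\Gamma$ is already null-homotopic in $X_{\Gamma_1}$.

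\textbf{Constructing the retraction.} We work in the universal covers $\widetilde X_{\Gamma_1}\subset\widetilde X_\Gamma$ and build a cellular retraction $\rho:\widetilde X_\Gamma\to \widetilde X_{\Gamma_1}$.

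First consider the intermediate cover $\widehat X_\Gamma$ associated with $\ker(A_\Gamma\to W_\Gamma)$. Its cells are pairs $(w,T)$ with $w\in W_\Gamma$ and $T$ spherical. Every $w\in W_\Gamma$ factors uniquely as $w=w_1u$, where $w_1\in W_{\Gamma_1}$ and $u$ is the minimal length representative of the coset $W_{\Gamma_1}w$. We define $\rho(w,T)=(w_1,\,T')$, where $T'$ is obtained from $T$ using the Deodhar/Bourbaki lemma on minimal coset representatives: $u\,W_T\,u^{-1}\cap W_{\Gamma_1}$ is a standard parabolic subgroup of the form $W_{T'}$ with $T'\subset V\Gamma_1$. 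This is the step where fullness of $\Gamma_1$ is essential, because it guarantees $W_{\Gamma_1}$ is itself a Coxeter group with the induced presentation (Coxeter groups have no such injectivity problem).

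We then check two things:
\begin{itemize}
\item $\rho$ respects face relations, so it defines a cellular map $\widehat X_\Gamma\to\widehat X_{\Gamma_1}$;
\item $\rho$ restricts to the identity on $\widehat X_{\Gamma_1}$.
\end{itemize}
Finally we lift $\rho$ to the universal covers. To do this we verify that $\rho$ sends each $2$-cell boundary relation $W(s,t,m)=W(t,s,m)$ either to the corresponding relation in $\Gamma_1$ or to a degenerate (collapsed) loop. This makes $\rho$ compatible with the deck actions of $A_\Gamma$ and $A_{\Gamma_1}$ on the relevant cells.

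\textbf{Concluding injectivity.} Given the retraction $\rho$, injectivity follows. A loop in $X_{\Gamma_1}$ that dies in $A_\Gamma$ lifts to a closed loop in $\widetilde X_\Gamma$. Any null-homotopy of it in $\widetilde X_\Gamma$ is pushed by $\rho$ to a null-homotopy inside $\widetilde X_{\Gamma_1}$. Hence the loop is trivial in $A_{\Gamma_1}$.

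\textbf{Main obstacle.} The hardest step is well-definedness and continuity of $\rho$ on higher cells. For a spherical $T$ not contained in $\Gamma_1$, the face $(w,T)$ must map onto the correct lower-dimensional face $(w_1,T')$ in a way that is compatible with all of its boundary faces $(ws,T\setminus\{t\})$. The first thing I would try is to verify this combinatorially using the coset-representative lemma: for $u$ minimal in $W_{\Gamma_1}u$ and $s\in T$, either $us u^{-1}\in W_{\Gamma_1}$ or $us$ is again minimal. Checking this dichotomy cell by cell should yield the required face compatibility.
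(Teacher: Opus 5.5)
The paper does not prove this statement; it only cites van der Lek, whose argument is topological, through the complexified hyperplane complement of the Tits cone. Your route is the combinatorial Salvetti-complex retraction of Godelle--Paris, and it works. Your definition $\rho(w,T)=(w_1,T')$ with $W_{T'}=uW_Tu^{-1}\cap W_{\Gamma_1}$ is the right one. The tempting formula $T'=V\Gamma_1\cap uTu^{-1}$ already fails for an edge $s$--$t$ labelled $3$, $V\Gamma_1=\{s\}$ and the cell $(t,\{s,t\})$: its vertices map to both $1$ and $s$, which forces $T'=\{s\}$. Citing is cheaper, and the paper only needs injectivity. Your approach is self-contained modulo standard Coxeter combinatorics and proves more: a retraction of Salvetti complexes, so for instance contractibility of the universal cover passes from $\Gamma$ to $\Gamma_1$.

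Three points need tightening.

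(1) The intersection lemma $W_{\Gamma_1}\cap dW_Td^{-1}=W_{V\Gamma_1\cap dTd^{-1}}$ requires $d$ to be minimal in the double coset $W_{\Gamma_1}uW_T$, but your $u$ is only minimal in $W_{\Gamma_1}u$. The double-coset normal form fixes this: every $W_{\Gamma_1}$-minimal element of $W_{\Gamma_1}uW_T$ lies in $dW_T$, so $uW_Tu^{-1}=dW_Td^{-1}$.

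(2) The face check you leave open is quicker with roots than with Deodhar's lemma cell by cell. The faces of $(w,T)$ are all $(v,T_0)$ with $v=wc$ and $c\in W_T$ minimal in $cW_{T_0}$, not only $(ws,T\setminus\{t\})$.
\begin{itemize}
\item Write $v=v_1u_v$ and $a=w_1^{-1}v_1$, so $uc=au_v$.
\item By (1), $u_v\in uW_T$, which gives $a\in W_{T'}$ and $T_0'\subset T'$.
\item For $x\in T_0'$, $\beta=u_v^{-1}\alpha_x$ is a root of $W_{T_0}$. It is positive because $u_v$ is $W_{\Gamma_1}$-minimal, so $0<c\beta=u^{-1}a\alpha_x$.
\item Since $u^{-1}$ sends positive roots of $W_{\Gamma_1}$ to positive roots, $a\alpha_x>0$.
\end{itemize}
Hence $a$ is minimal in $aW_{T_0'}$ and $\rho(v,T_0)\le\rho(w,T)$. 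So $\rho$ is a poset map fixing $\widehat X_{\Gamma_1}$, hence a simplicial retraction.

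(3) Drop the lifting step. As written it presupposes $\widetilde X_{\Gamma_1}\subset\widetilde X_\Gamma$, which is the claim itself. Also, $\rho$ is only $W_{\Gamma_1}$-equivariant ($w\mapsto w_1$ is not a homomorphism), so there is no compatibility with the $A_\Gamma$-action to verify. The retraction of $\widehat X_\Gamma$ onto $\widehat X_{\Gamma_1}$ already makes $\ker(A_{\Gamma_1}\to W_{\Gamma_1})\to\ker(A_\Gamma\to W_\Gamma)$ injective. Injectivity of $W_{\Gamma_1}\to W_\Gamma$ then finishes the diagram chase.
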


The following is standard, see e.g. \cite[\S 2]{brady2002two}.
\begin{theorem}
\label{thm:2generator}
Suppose $\Gamma$ is a single edge labeled by $m$. Then
\begin{enumerate}
    \item if $m\ge 3$, then $A_\Gamma$ has a finite index subgroup that splits as a product of $\mathbb Z$ and a non-abelian free group, in particular, $A_\Gamma$ is not amenable;
    \item if $m$ is odd, then the center of $A_\Gamma$ is a $\mathbb Z$-subgroup generated by $W^2(a,b,m)$, where $a,b$ are vertices of $\Gamma$;
    \item if $m$ is even and $m\neq 2$, then the center of $A_\Gamma$ is a $\mathbb Z$-subgroup generated by $W(a,b,m)$. 
\end{enumerate}
\end{theorem}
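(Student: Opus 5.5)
The plan is to reduce the three statements to the dihedral Artin group $A_\Gamma=\langle a,b \mid W(a,b,m)=W(b,a,m)\rangle$ and its well-known structure as a central extension. Write $\Delta=W(a,b,m)$ for the Garside element. The relation says exactly that $\Delta=W(b,a,m)$. Conjugation by $\Delta$ acts on $\{a,b\}$ as the identity when $m$ is even and as the swap $a\leftrightarrow b$ when $m$ is odd. This follows by direct manipulation of alternating words: $a\Delta = a\,W(b,a,m)$, and this equals $W(a,b,m+1)$, which is either $\Delta a$ or $\Delta b$ according to the parity of $m$. Hence $\Delta$ (for $m$ even) or $\Delta^2$ (for $m$ odd) is central. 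I would then set $z$ to be this central element and pass to the quotient $A_\Gamma/\langle z\rangle$.

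For part (1), I would identify the quotient concretely. For $m$ odd, put $x=ab\cdots$ (a word of length $m$ with $x^2=z$). Substituting shows $A_\Gamma\cong\langle x,y \mid x^2=y^m\rangle$ with $y=ab$, since $(ab)^m=\Delta^2$ when $m$ is odd. Thus $A_\Gamma/\langle z\rangle\cong \mathbb Z/2 * \mathbb Z/m$. For $m$ even, one gets $\langle y, a \mid y^{m/2} \text{ central}\rangle$ with $y=ab$, and the quotient by $\langle y^{m/2}\rangle$ is $\mathbb Z/(m/2)*\mathbb Z$. In both cases, when $m\ge3$ the quotient is a virtually free, non-virtually-cyclic group: for $m$ odd, $\mathbb Z/2*\mathbb Z/m$ with $m\ge3$; for $m$ even and $m\ge4$, $\mathbb Z/(m/2)*\mathbb Z$ with $m/2\ge2$. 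Take a finite-index free subgroup $F$ of the quotient, which is nonabelian, and let $\tilde F$ be its preimage in $A_\Gamma$. Then $\tilde F$ is a central extension of the free group $F$ by $\langle z\rangle\cong\mathbb Z$. This extension splits because $F$ is free, so $\tilde F\cong \mathbb Z\times F$. Nonamenability of $A_\Gamma$ follows because it contains a nonabelian free group.

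For parts (2) and (3), I would show the center is exactly $\langle z\rangle$ and that $z$ has infinite order. For infinite order, use the abelianization: $A_\Gamma\to\mathbb Z$ sending $a,b\mapsto1$ sends $\Delta\mapsto m$, so $z$ maps to $2m$ or $m$, which is nonzero. For the center, any central element maps into the center of $A_\Gamma/\langle z\rangle$. That quotient is a nontrivial free product which is not $\mathbb Z/2*\mathbb Z/2$ (this uses $m\ge3$), and such a free product has trivial center. Hence $Z(A_\Gamma)\subseteq\langle z\rangle$, and equality holds by centrality of $z$.

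The only step requiring care is the presentation identification of the quotient. I would check it by explicit Tietze transformations: the map $x\mapsto W(a,b,m)$, $y\mapsto ab$ for $m$ odd, with inverse $a=y^{-(m-1)/2}x$ and $b=a^{-1}y$. The even case goes the same way. This is standard and can alternatively be cited from \cite[\S 2]{brady2002two}.
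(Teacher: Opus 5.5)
Your proposal is correct and is the standard argument that the paper relies on by citing \cite[\S 2]{brady2002two} rather than proving it: $\Delta$ (for $m$ even) or $\Delta^2$ (for $m$ odd) is central of infinite order, the central quotient is $\mathbb Z/2*\mathbb Z/m$ or $\mathbb Z*\mathbb Z/(m/2)$, and the central extension splits over a finite-index free subgroup. Two cosmetic points: the infinite order of $z$, which you only establish in the paragraph on the center, is already needed in part (1) to get a $\mathbb Z$ factor; and excluding $\mathbb Z/2*\mathbb Z/2$ is unnecessary, since any free product of two nontrivial groups has trivial center.
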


For each edge $e$ of $\Gamma$ labeled by $m$ with its vertices being $a$ and $b$, we define $\Delta_e=W(a,b,m)$ if $m$ is even, and $\Delta_e=W^2(a,b,m)$ if $m$ is odd.

Let $v\in \Gamma$ be a vertex and $\{e_i\}_{i=1}^n$ the collection of edges containing $v$. Let $H_v$ be the subgroup of $A_\Gamma$ generated by $v$ and $\{\Delta_{e_i}\}_{i=1}^n$. For each edge $e\subset \Gamma$ with label $\ge 3$, let $H_e$ be the subgroup of $A_\Gamma$ generated by vertices of $e$. By Theorem~\ref{thm:lek} and Theorem~\ref{thm:2generator}, $H_e$ is a nonamenable group with an infinite center.

\begin{lemma}
\label{lem:non amenable vertex}
Let $v\in \Gamma$ be a vertex. Suppose the closed star of $v$ has at least two distinct vertices $u$ and $w$, not equal to $v$, such that $u$ and $w$ do not commute. Then $H_v$ is a nonamenable group with an infinite center.
\end{lemma}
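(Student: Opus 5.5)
We prove the two assertions separately. The infinite center comes directly from the generating set; nonamenability will be reduced, via Theorem~\ref{thm:lek}, to a statement about the three-generator Artin group on $v,u,w$.

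\emph{Infinite center.} Each $\Delta_{e_i}$ lies in the center of $A_{e_i}$ by Theorem~\ref{thm:2generator}, which we may apply since $A_{e_i}\hookrightarrow A_\Gamma$ by Theorem~\ref{thm:lek}. When the label of $e_i$ is $2$, $\Delta_{e_i}$ is a product of $v$ and a neighbour commuting with $v$. In either case $\Delta_{e_i}$ commutes with $v$. Since $v$ also commutes with itself, $v$ commutes with every generator of $H_v$, so $\langle v\rangle$ is central in $H_v$.

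To see that $v$ has infinite order, use the homomorphism $A_\Gamma\to\mathbb Z$ sending every standard generator to $1$. It is well defined because both sides of each relation $W(s_i,s_j,m)=W(s_j,s_i,m)$ have length $m$. Under this map $v\mapsto 1$, so $\langle v\rangle\cong\mathbb Z$ and the center of $H_v$ is infinite.

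\emph{Nonamenability.} Let $\Lambda$ be the full subgraph on $\{v,u,w\}$; by Theorem~\ref{thm:lek}, $A_\Lambda\le A_\Gamma$. Write $a=\Delta_{vu}$ and $b=\Delta_{vw}$. Each is a positive power of $vu$ (resp.\ $vw$): $\Delta_e=(vu)^{m/2}$ for even $m$ and $(vu)^m$ for odd $m$. The plan is to show $\langle a,b\rangle\le H_v$ contains a nonabelian free group, which makes $H_v$ nonamenable. We split on whether $u$ and $w$ are adjacent.

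\begin{enumerate}[(a)]
\item If $u,w$ are not adjacent, then $A_\Lambda=A_{vu}\ast_{\langle v\rangle}A_{vw}$. We act on the Bass--Serre tree $T$. The element $a$ fixes the vertex $A_{vu}$, and $b$ fixes the vertex $A_{vw}$. Since $a\notin\langle v\rangle$ (its image in the abelianization differs from any power of $v$ in the $u$-coordinate when $u$ is a separate class, and otherwise one argues via the normal form in $A_{vu}$), $a$ does not fix the edge $\langle v\rangle$. Similarly for $b$. A standard ping-pong on $T$ shows that $ab$ is hyperbolic. Moreover $\langle a^N,b^N\rangle$ for large $N$ is free, using that $\langle v\rangle$ has infinite index in each $A_{ve}$ and $a^N\notin\langle v\rangle$ for all $N\ne0$.

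\item If $u,w$ span an edge with label $\ge3$, then $\Lambda$ is a triangle and $A_\Lambda$ need not split. I would try the following in order.
\begin{enumerate}[(i)]
\item First, seek a quotient of $A_\Lambda$ in which $v$ dies but the images of $vu$ and $vw$ survive and generate a nonabelian subgroup. For even labels on $vu$ and $vw$, the map $A_\Lambda\to\mathbb Z\times A_{uw}$, $v\mapsto(1,1)$, $u\mapsto(1,u)$, $w\mapsto(1,w)$ respects the relations. It sends $a,b$ to elements whose $A_{uw}$-components are powers of $u$ and of $w$, which generate a nonabelian free group inside the nonamenable $A_{uw}$ (Theorem~\ref{thm:2generator}) after passing to powers.
\item For odd labels this map fails, since $u$ and $v$ become conjugate. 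Here I would instead use the action on the Deligne complex of $A_\Lambda$, which is CAT(0) in the relevant (2-dimensional or spherical-free) cases. There $a$ and $b$ fix the cone points of $A_{vu}$ and $A_{vw}$ respectively, and one checks a ping-pong configuration.
\end{enumerate}
\end{enumerate}

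This last case, odd labels in a triangle and especially spherical triangles such as $(2,3,3)$-type, is the step I expect to be the main obstacle. There a direct argument inside the finite-type Artin group using the Garside structure may be necessary.
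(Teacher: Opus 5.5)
You prove the infinite center correctly; the length homomorphism $A_\Gamma\to\mathbb Z$ is a fine substitute for Theorem~\ref{thm:lek}. Case (a) is also correct. No nonzero power of $\Delta_{vu}$ lies in $\langle v\rangle$: for odd $m_{vu}$, a power $\Delta_{vu}^N=v^k$ would make a nontrivial power of $v$ central in $A_{vu}$, which it is not. The same holds for $\Delta_{vw}$. Given this, the normal form theorem for $A_{vu}\ast_{\langle v\rangle}A_{vw}$ already gives $\langle\Delta_{vu},\Delta_{vw}\rangle\cong F_2$, without passing to powers. Sub-case (b)(i) also works. However, the claim that high powers of $u$ and $w$ generate a free group in $A_{uw}$ is not what Theorem~\ref{thm:2generator} says, so it needs its own justification (e.g.\ via the virtually free quotient $A_{uw}/Z(A_{uw})$).

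The genuine gap is (b)(ii), which you leave unproved: $u,w$ adjacent with at least one of $m_{vu},m_{vw}$ odd. This is the heart of the lemma. It contains the 2-dimensional triangles such as $(3,3,3)$, and the spherical triangles $(2,3,3)$, $(2,3,4)$ with $m_{uw}=4$, and $(2,3,5)$ with $m_{uw}\in\{3,5\}$.

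\emph{2-dimensional triangles.} Saying ``one checks a ping-pong configuration'' in the Deligne complex skips exactly the nontrivial part. That ping-pong is the content of the argument in \cite[Lem.~9(ii)]{crisp2005automorphisms}, which the paper invokes to get $\langle\Delta_{vu},\Delta_{vw}\rangle\cong F_2$. You must either carry it out or cite it.

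\emph{Spherical triangles.} The Deligne complex is a cone whose apex is fixed by every element, so no ping-pong is available there. The idea you are missing is elementary. Since $m_{uw}\ge 3$, sphericity forces one of the labels at $v$ to equal $2$, say $m_{vu}=2$. Then $\Delta_{vu}=vu$, so $u=v^{-1}\Delta_{vu}\in H_v$. The same remark handles $m_{vu}=m_{vw}=2$ directly, since then $\langle u,w\rangle\le H_v$, without your quotient map. In the spherical case it reduces the problem to showing that high powers of $u$ and $\Delta_{vw}$ generate a free group in the finite-type Artin group on $\{u,v,w\}$. The paper takes this from \cite[Thm.~1.2]{jankiewicz2022right}.

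Without these two inputs, or substitutes for them, you have not established that $H_v$ is nonamenable.
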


\begin{proof}
By definition, the subgroup generated by $v$ is contained in the center of $H_v$, and the subgroup generated by $v$ is isomorphic to $\mathbb Z$ by Theorem~\ref{thm:lek}, so $H_v$ has infinite center. It remains to show $H_v$ is nonamenable. Let $A_1$ be the subgroup of $A_\Gamma$ generated by $u,v,w$. Then $A_1$ is also an Artin group with the obvious relations by Theorem~\ref{thm:lek}. Let $m_{uw}$ be the label of the edge between $u$ and $w$ in $\Gamma$ (if there is no edge between $u$ and $w$, we define $m_{uw}=\infty$). Similarly, we define $m_{uv}$ and $m_{vw}$. If $\frac{1}{m_{uw}}+\frac{1}{m_{vw}}+\frac{1}{m_{vu}}\le 1$, then $A_1$ is a \emph{2-dimensional} Artin group (see \cite[\S 1]{crisp2005automorphisms}). Let $e_1,e_2$ be the edges $vu,vw$ respectively. Then the proof of \cite[Lem 9(ii)]{crisp2005automorphisms} implies that $\Delta_{e_1}$ and $\Delta_{e_2}$ generate a free group of rank 2. Thus $H_v$ is nonamenable. Now we assume $\frac{1}{m_{uw}}+\frac{1}{m_{vw}}+\frac{1}{m_{vu}}> 1$. In particular $m_{uw}<\infty$, hence $u$ and $w$ are joined by an edge. If $m_{uv}=m_{wv}=2$, then the subgroup generated by $u$ and $w$ is contained in $H_v$. As $m_{uw}\ge 3$ (we are assuming $u$ and $w$ do not commute), by Theorem~\ref{thm:lek} and Theorem~\ref{thm:2generator} (1), this subgroup is nonamenable, hence $H_v$ is nonamenable. It remains to consider the case where at least one of $m_{uv}$ and $m_{wv}$ is $\le 3$. Then the only possibilities for $(m_{uv},m_{wv},m_{uw})$ are $(2,3,3),(2,3,4)$ and $(2,3,5)$ (up to permutation), and the corresponding Artin group $A_1$ is a \emph{spherical Artin group}. Assume without loss of generality that $m_{uv}=2$. Then the subgroup generated by $u$ and $\Delta_{e_2}$ is contained in $H_v$. As $A_1$ is a spherical Artin group, by \cite[Thm 1.2]{jankiewicz2022right}, sufficiently high powers of $u$ and $\Delta_{e_2}$ generates a free group of rank 2, so $H_v$ is nonamenable.
\end{proof}

Suppose $v$ is an essential vertex of $\Gamma$ such that $|\st(v)|\ge3$. If the closed star of $v$ is not complete, then $v$ satisfies the assumptions of Lemma \ref{lem:non amenable vertex} since there are at least two vertices in $N_\Gamma(v)$ not connected by an edge, and that therefore do not commute. 

If the closed star of $v$ is complete, then by the same arguments as in the proof of Proposition \ref{prop:raag decomposition}, one can show that $N_\Gamma(v)=\Gamma$. In particular, since $\Gamma$ is not a complete graph with all edges labeled by 2, there must be two vertices $w,u\in\st(v)$ such that the edge $(w,u)$ is labeled by $m\ge3$. If $w=v$, then we may pick some vertex $v'\in\st(v)$ not equal to $v$ or $u$. Since $N_\Gamma(v)$ is complete and $v$ is essential, $\st(v)=\st(v')$. From this it follows that $v'$ is also an essential vertex. Moreover, $\st(v')$ contains two distinct vertices not equal to $v'$, namely $v$ and $u$, that do not commute. Therefore, given a maximal star $\st(v)$ such that $|\st(v)|\ge3$, we may always find an essential vertex $v'\in\st(v)$ such that $\st(v')$ satisfies the assumptions of Lemma \ref{lem:non amenable vertex} and $\st(v')=\st(v)$.

For an Artin group $A_\Gamma$, we define a family $\mathcal H$ of subgroups of $A_\Gamma$ as follows: For every edge $e$ of $\Gamma$ with label $\ge 3$, $H_e\in\mathcal{H}$, and for every maximal star $S$ such that $|S|\ge3$, $H_v\in \Gamma$ where $v\in S$ is chosen such that $\st(v)=S$ and $v$ satisfies the assumptions of Lemma \ref{lem:non amenable vertex}.

\begin{proposition}
\label{prop:artin group decomposition}
Suppose $\Gamma$ is a connected simplicial graph which is not a complete graph with all edges labeled by 2. Then
\begin{enumerate}
    \item $\mathcal H$ generates $A_\Gamma$;
    \item each member of $\mathcal H$ is nonamenable with infinite center;
    \item we can list elements of $\mathcal H$ as $\{H_1,H_2,\ldots,H_n\}$ (repetition of an element of $\mathcal H$ is allowed in the listing) such that $H_i\cap H_{i+1}$ is infinite for $1\le i\le n-1$.
\end{enumerate}
\end{proposition}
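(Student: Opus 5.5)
The plan is to verify the three items separately: (2) is essentially already done, (1) is a check that every standard generator lies in some member of $\mathcal H$, and (3) reduces to showing that an auxiliary "intersection graph" on $\mathcal H$ is connected, after which the walk argument from Proposition~\ref{prop:raag decomposition} applies.

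\emph{Item (2).} For $H_e$ this is Theorem~\ref{thm:lek} together with Theorem~\ref{thm:2generator}. For $H_v$ it is Lemma~\ref{lem:non amenable vertex}, since $v$ was chosen, by the discussion preceding the definition of $\mathcal H$, to satisfy that lemma's hypotheses.

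\emph{Item (1).} First dispose of degenerate graphs. A single vertex, or a single edge labelled $2$, is a complete graph with all labels $2$, so it is excluded. Hence either $\Gamma$ is a single edge with label $\ge 3$, where $H_e=A_\Gamma$, or $\Gamma$ has at least $3$ vertices. In the latter case connectedness forces every maximal star to have at least $3$ vertices: a maximal star $\{v,x\}$ would force $\Gamma=\{v,x\}$. Now take a vertex $x$.
\begin{itemize}
\item If $x$ lies on an edge $e$ with label $\ge 3$, then $x\in H_e$.
\item Otherwise every edge at $x$ is labelled $2$. Pick the maximal star $S\ni x$ and the chosen $v$ with $\st(v)=S$. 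If $x=v$ we are done. If not, the edge $vx$ has label $2$, so $\Delta_{vx}=vx$ and hence $x=v^{-1}\Delta_{vx}\in H_v$.
\end{itemize}
Thus $\mathcal H$ contains all standard generators.

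\emph{Item (3).} Form the graph $\mathcal G$ with vertex set $\mathcal H$, joining two members when their intersection is infinite. For infiniteness we use that each $\Delta_e$ and each vertex generator is a nontrivial element of a dihedral Artin subgroup, which embeds by Theorem~\ref{thm:lek} and is torsion-free, so these elements have infinite order. Once $\mathcal G$ is shown connected, we concatenate paths between consecutive elements of an enumeration exactly as in Proposition~\ref{prop:raag decomposition}.

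To show $\mathcal G$ is connected, record the basic adjacencies:
\begin{itemize}
\item Two edge groups $H_e,H_{e'}$ with $e,e'$ sharing a vertex $x$ both contain $\langle x\rangle$.
\item $H_e$ and $H_v$ with $v\in e$ both contain $\langle\Delta_e\rangle$.
\item If $x\in\st(v)$ and the edge $vx$ has label $2$, then $x\in H_v$. So $H_v$ meets $H_{v'}$ in $\langle x\rangle$ whenever $x\in\st(v)\cap\st(v')$ with both edges to $x$ labelled $2$, or $x=v$ or $x=v'$.
\end{itemize}
Given a path $x_0,x_1,\dots,x_m$ in $\Gamma$, attach to each vertex $x_i$ a member of $\mathcal H$ containing it, as in item (1). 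The task is then to connect consecutive choices through chains of the adjacencies above. The typical detour is: if $x\in\st(v)\cap\st(v')$ and $vx$ has label $\ge3$, pass $H_v\to H_{vx}$ via $\Delta_{vx}$, then $H_{vx}\to H_{v'x}$ via $\langle x\rangle$ (or $H_{vx}\to H_{v'}$ directly if $v'x$ has label $2$), then $H_{v'x}\to H_{v'}$ via $\Delta_{v'x}$.

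I expect this case analysis to be the main obstacle: one must check that every pair of consecutive path vertices, lying in stars of chosen centres $v$ or on heavy edges, yields such a chain. The subtle cases are when the chosen centre $v$ differs from the actual vertex $x$ in the star, and when labels are mixed between $2$ and $\ge3$. I would organize the check by the labels of the edges joining $x$ to the centres $v$ and $v'$.
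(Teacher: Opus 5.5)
Your route is the paper's. Items (1) and (2) are argued exactly as there, and you are even a little more careful about the one-vertex and one-edge graphs. Your detour $H_v\to H_{vx}\to H_{v'x}\to H_{v'}$ is precisely the paper's insertion of $H_{e_i}$ and/or $H_{e_{i+1}}$ between consecutive star groups. The problem is that item (3) is left unfinished, and with your attachment rule it does not close as stated. If each path vertex $x_i$ is attached to an arbitrary member containing it, consecutive choices can have disjoint stars. For example, on the path $p_1p_2\cdots p_6$ with all labels $2$, you may attach $p_3$ to $H_{p_2}=\langle p_1,p_2,p_3\rangle$ and $p_4$ to $H_{p_5}=\langle p_4,p_5,p_6\rangle$. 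Their intersection is trivial, and your detour needs a common vertex $x\in\st(v)\cap\st(v')$, so it has nothing to act on.

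The missing link is the connectivity argument for $\Gamma'$ from Proposition~\ref{prop:raag decomposition}, which is what the paper reuses. Attach to $x_i$ the group $H_{w_i}$ of a chosen centre with $\st(x_i)\subseteq\st(w_i)$; this exists because every closed star lies in a maximal one. Then $x_i\in\st(x_{i+1})\subseteq\st(w_{i+1})$, so $x=x_i$ lies in $\st(w_i)\cap\st(w_{i+1})$. Your detour, with its trivial variants when an edge to $x$ has label $2$ or $x$ is itself a centre, then joins $H_{w_i}$ to $H_{w_{i+1}}$, and no further case analysis is needed. Each heavy-edge group $H_e$ with $a\in e$ is then hooked onto the group $H_w$ of a maximal star containing $a$. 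This goes directly via $\langle a\rangle$ if $a=w$ or $aw$ has label $2$, and through $H_{aw}$ otherwise; this is the paper's final splicing step.

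One small correction to your third bullet: if $x=v\neq v'$ and $vv'$ has label $\ge 3$, then $x$ need not lie in $H_{v'}$. However, $H_v\cap H_{v'}$ is still infinite, because $\Delta_{vv'}$ is a generator of both groups.
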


\begin{proof}
    To show (1), let $v$ be a vertex of $\Gamma$ and suppose $v$ is contained in $\st(w)$ where $w$ is an essential vertex. If $|\st(w)|=2$, then $\Gamma$ consists of a single edge $e$ labeled by $m\ge3$, and $\Gamma$ is therefore generated by $H_e$. Otherwise, $|\st(w)|\ge3$, and there exists $H_u$ such that $\st(u)=\st(w)$. If $v=u$, then clearly $H_{\{v\}}\subset H_u$. If the edge $e=(v,u)$ is labeled by 2, then $\Delta_e=vu$, so $v=\Delta_eu^{-1}\in H_u$ and $H_{\{v\}}\subset H_u$. Otherwise, $e=(v,u)$ is labeled by $m\ge3$, so $H_{\{v\}}\subset H_e$. Hence, every vertex group is contained in some $H_u$ or $H_e$ in $\mathcal{H}$, so $\mathcal{H}$ generates $A_\Gamma$.

     Condition (2) follows directly from the definition of $\mathcal{H}$ along with Lemma \ref{lem:non amenable vertex} and the preceding remarks.

    To show (3), let $\Gamma'$ and $P=(w_1,...,w_k)$ be defined as in the proof of Proposition \ref{prop:raag decomposition}. If $H_{w_i}\cap H_{w_{i+1}}$ is not infinite, then given $v\in H_{w_i}\cap H_{w_{i+1}}$, either $e_{i}=(v,w_i)$ or $e_{i+1}=(v,w_{i+1})$ is labeled by $m\ge3$ (since otherwise $H_{w_i}\cap H_{w_{i+1}}$ would contain $H_{\{v\}}$). 
    Suppose first that $e_i$ is labeled by $m\ge3$ and $e_{i+1}$ is labeled by 2. Then $H_{w_i}\cap H_{e_i}$ contains the subgroup generated by $\Delta_{e_i}$, and is therefore infinite. Also, $H_{e_i}\cap H_{w_{i+1}}$ contains $H_{\{v\}}$ and is therefore also infinite. Similarly, if $e_{i+1}$ is labeled by $m\ge3$ and $e_{i}$ is labeled by 2, then $H_{w_i}\cap H_{e_{i+1}}$ and $H_{e_{i+1}}\cap H_{w_{i+1}}$ are both infinite. Now suppose $e_i$ and $e_{i+1}$ are both labeled by $m\ge3$. Since $H_{e_i}\cap H_{e_{i+1}}$ contains $H_{\{v\}}$, it is infinite. Therefore $H_{w_i}\cap H_{e_i}$, $H_{e_i}\cap H_{e_{i+1}}$, and $H_{e_{i+1}}\cap H_{w_{i+1}}$ are all infinite. Hence, by adding a subset of $\{H_{e_i},H_{e_{i+1}}\}$ between $H_{w_i}$ and $H_{w_{i+1}}$ whenever $H_{w_i}\cap H_{w_{i+1}}$ is not infinite, we create a list $\{H_i\}_{i=1}^m$ of element of $\mathcal{H}$ that contains all of the $H_v$ and such that $H_i\cap H_{i+1}$ is infinite. 
    
    Finally, suppose there is some $H_e$ not contained in the list $\{H_i\}_{i=1}^m$. Let $v$ be a vertex of $H_e$ and let $\st(w)$ be a maximal star containing $v$. If the edge $(v,w)$ is labeled by 2, then $H_e\cap H_{w}$ is infinite as it contains $H_{\{v\}}$. If the edge $(v,w)$ is labeled by $m\ge3$, then $H_e\cap H_{(v,w)}$ is infinite as it contains $H_{\{v\}}$, and $H_{(v,w)}\cap H_{w}$ is infinite as it contains $H_{\{w\}}$. We know $H_w$ appears in $\{H_i\}_{i=1}^m$ at least once, so we may replace the first occurrence of $H_w$ with $(H_w,H_e,H_w)$ if $(v,w)$ is labeled by 2 or with $(H_w,H_{(v,w)},H_e,H_{(v,w)},H_w)$  otherwise to obtain a new list $\{H_i\}_{i=1}^m$ containing $H_e$ such that $H_i\cap H_{i+1}$ is infinite for all $1\le i\le m-1$. Repeating this with the remaining $H_e$, we obtain a list $\{H_i\}_{i=1}^n$ containing all elements of $\mathcal{H}$ such that $H_i\cap H_{i+1}$ is infinite for all $1\le i\le n-1$.
\end{proof}
Combining Proposition \ref{prop:rigidity_conditions}(ii,iii) with Proposition \ref{prop:artin group decomposition}, we obtain the following:

\begin{corollary}
    Suppose $\Gamma$ is a connected simplicial graph which is not a complete graph with all edges labeled by 2. Then $A_\Gamma$ is $\alpha_t$--rigid.
\end{corollary}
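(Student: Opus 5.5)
The plan is to apply Proposition~\ref{prop:artin group decomposition} to obtain a chain of subgroups. Then each link is rigid by Proposition~\ref{prop:rigidity_conditions}(ii). Finally, rigidity propagates along the chain by repeated use of Proposition~\ref{prop:rigidity_conditions}(iii).

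Concretely, since $\Gamma$ is connected and not a complete graph with all labels $2$, Proposition~\ref{prop:artin group decomposition} gives a listing $H_1,\dots,H_n$ of the members of $\mathcal H$. These generate $A_\Gamma$, each is nonamenable with infinite center, and consecutive members satisfy $|H_i\cap H_{i+1}|=\infty$. By Proposition~\ref{prop:rigidity_conditions}(ii), each $H_i$ is $\alpha_t$--rigid.

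Next set $K_j=\langle H_1,\dots,H_j\rangle$ and argue by induction on $j$ that $K_j$ is $\alpha_t$--rigid. The base case $K_1=H_1$ is done. For the inductive step, $K_{j+1}$ is generated by the two $\alpha_t$--rigid subgroups $K_j$ and $H_{j+1}$. Their intersection contains $H_j\cap H_{j+1}$, which is infinite. Hence Proposition~\ref{prop:rigidity_conditions}(iii) applies, with ambient group $K_{j+1}$, and shows that $K_{j+1}$ is $\alpha_t$--rigid. Since $\mathcal H$ generates $A_\Gamma$ and every member of $\mathcal H$ appears in the list, $K_n=A_\Gamma$, which gives the claim.

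The only point requiring care is the use of (iii) with an intermediate ambient group rather than $A_\Gamma$ itself. This should be harmless, because $\alpha_t$--rigidity is a property of the subgroup's von Neumann algebra inside the free product deformation, so statement (iii) is about the subgroup generated by $H_1$ and $H_2$. If one prefers to avoid this point, one can instead note that (iii) is stated for $G=\langle H_1,H_2\rangle$, which is exactly the form used at each step. Nothing else is a real obstacle; the substantive work was already done in Proposition~\ref{prop:artin group decomposition} and Lemma~\ref{lem:non amenable vertex}.
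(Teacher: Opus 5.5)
Your proposal is correct and takes the same approach as the paper, which obtains the corollary in one line by combining Proposition~\ref{prop:rigidity_conditions}(ii) and (iii) with Proposition~\ref{prop:artin group decomposition}. Your induction on $K_j=\langle H_1,\dots,H_j\rangle$, using $K_j\cap H_{j+1}\supseteq H_j\cap H_{j+1}$, spells out the chain propagation that the paper leaves implicit, and it uses (iii) in exactly the form it is stated.
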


\section{Coxeter Groups}
We may obtain a similar decomposition for nonamenable, not relatively hyperbolic Coxeter groups. For this, we require the class of Coxeter groups $\T$ and $\T_0$ defined in \cite[Section A.1]{BehrstockThickness2017}.

\begin{definition}
    \label{def:T}
    Define the class of Coxeter systems $\T$ inductively as follows:
    \begin{enumerate}
        \item $\T$ contains $\T_0$, the class of all irreducible affine Coxeter systems $(W,S)$ with $|S|\ge3$ along with Coxeter systems of the form $(W,S)=(W_{S_1},S_1)\times(W_{S_2},S_2)$ where $(W_{S_i},S_i)$ is irreducible non-spherical for $i=1,2$.
        \item If $S=S_0\cup\{s\}$ is such that $(W_{S_0},S_0)\in\T$ and $s^\perp\subset S_0$ is non-spherical, then $(W,S)\in\T$.
        \item If $S=S_1\cup S_2$ is such that $(W_{S_1},S_1),(W_{S_2},S_2)\in\T$ and $S_1\cap S_2$ is non-spherical, then $(W,S)\in\T$.
    \end{enumerate}
\end{definition}
The class $\T$ allows us to characterize relatively hyperbolic Coxeter groups:
\begin{proposition}\cite[Corollary A.10]{BehrstockThickness2017}
    \label{prop:rel hyperbolicity condition}
    A Coxeter system $(W,S)$ is not relatively hyperbolic with respect to any family of proper subgroups if and only if $(W,S)\in\T$.
\end{proposition}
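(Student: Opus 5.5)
The plan is to prove the two implications separately. For ``$(W,S)\in\T$ implies not relatively hyperbolic'' I will use the behaviour of NRH subgroups inside a relatively hyperbolic group. For the converse I will use Caprace's combinatorial characterization of relatively hyperbolic Coxeter groups, applied with the peripheral family given by the maximal subsets of $S$ that lie in $\T$.

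\emph{($\Leftarrow$) Members of $\T$ are not relatively hyperbolic.} Call a group NRH if it is not relatively hyperbolic with respect to any family of proper subgroups. I will use the standard lemma of Behrstock--Dru\c{t}u--Mosher. Let $G$ be hyperbolic relative to $\{P_j\}$. Then every NRH subgroup of $G$ lies in a conjugate of some $P_j$. Moreover, by almost malnormality, two such subgroups with infinite intersection lie in the same conjugate. Finally, an element normalizing an infinite subgroup of $gP_jg^{-1}$ lies in $gP_jg^{-1}$.

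I then argue by induction along Definition~\ref{def:T}, showing that every $(W,S)\in\T$ is NRH.
\begin{enumerate}
\item Base case $\T_0$. An irreducible affine group with $|S|\ge 3$ is virtually $\mathbb Z^{n}$ with $n\ge 2$, hence NRH. A product of two irreducible non-spherical Coxeter groups is a product of two infinite groups, hence NRH.
\item Step (2) of the definition. Suppose $W$ were hyperbolic relative to proper subgroups. By induction $W_{S_0}\subset gPg^{-1}$ for some peripheral $P$. The element $s$ centralizes $W_{s^\perp}$, which is infinite because $s^\perp$ is non-spherical. Hence $s\in gPg^{-1}$, so $W\subset gPg^{-1}$, contradicting properness.
\item Step (3) of the definition. $W_{S_1}$ and $W_{S_2}$ are NRH and $W_{S_1}\cap W_{S_2}\supseteq W_{S_1\cap S_2}$ is infinite. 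So both lie in the same conjugate of one peripheral, and again $W$ is contained in it, a contradiction.
\end{enumerate}

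\emph{($\Rightarrow$) If $(W,S)\notin\T$, then $W$ is relatively hyperbolic.} I will invoke Caprace's theorem. It says $W$ is hyperbolic relative to $\{W_J\}_{J\in\mathcal J}$, with $\mathcal J$ a family of subsets of $S$, if and only if four conditions hold:
\begin{enumerate}
\item every irreducible affine $T\subset S$ with $|T|\ge 3$ lies in some $J\in\mathcal J$;
\item every union $T_1\cup T_2$ of commuting, irreducible, non-spherical subsets lies in some $J\in\mathcal J$;
\item $J\cap J'$ is spherical for distinct $J,J'\in\mathcal J$;
\item $s^\perp\cap J$ is spherical for every $J\in\mathcal J$ and $s\notin J$.
\end{enumerate}

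I take $\mathcal J$ to be the set of maximal subsets $J\subset S$ with $(W_J,J)\in\T$. Each such $W_J$ is a proper subgroup, because $S\notin\T$. The four conditions then follow from the definition of $\T$.
\begin{enumerate}
\item Conditions (1) and (2) hold because $\T_0\subset\T$ and every member of $\T$ is contained in a maximal one.
\item Condition (3) holds because if $J\cap J'$ were non-spherical, clause (3) of the definition would put $J\cup J'$ in $\T$, contradicting maximality.
\item Condition (4) holds because if $s^\perp\cap J$ were non-spherical, then $s^\perp\cap J\subset J$ and clause (2) of the definition would put $J\cup\{s\}$ in $\T$, contradicting maximality.
\end{enumerate}
If $\mathcal J$ is empty, Caprace's criterion (equivalently Moussong's theorem) makes $W$ hyperbolic, which is relatively hyperbolic with respect to the empty family.

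\emph{Expected obstacle.} The main difficulty is the bookkeeping in applying Caprace's theorem: matching his exact hypotheses (irreducibility conventions, the treatment of spherical and affine pieces) to the inductive clauses of $\T$. In particular, condition (4) must be checked for every $s\notin J$, including when $s^\perp$ meets $J$ only partially. I would first verify directly that maximality of $J$ excludes non-spherical $s^\perp\cap J$ in each such case.
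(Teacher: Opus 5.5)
Your proof is correct. Note that the paper gives no argument of its own here: the statement is quoted from the appendix of \cite{BehrstockThickness2017} (written jointly with Caprace).

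Your direction ``$(W,S)\notin\T$ implies relatively hyperbolic'' is the argument used there. One applies Caprace's criterion to the maximal subsets $J\subset S$ with $(W_J,J)\in\T$. Maximality, together with clauses (2) and (3) of Definition~\ref{def:T}, gives the two sphericity conditions.

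\begin{itemize}
\item Your condition (4) is equivalent to Caprace's formulation: $U^\perp\subseteq J$ for every minimal (equivalently, every irreducible) non-spherical $U\subseteq J$. Indeed, for $s\notin J$ one has $s\in U^\perp$ iff $U\subseteq s^\perp\cap J$, and every non-spherical set contains a minimal non-spherical one.
\item The obstacle you anticipate does not arise. Clause (2) is applied to the subsystem $(W_{J\cup\{s\}},J\cup\{s\})$, where the perp of $s$ is exactly $s^\perp\cap J$, whatever $s^\perp$ looks like outside $J$.
\end{itemize}

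For the converse, the cited source shows that members of $\T$ are (strongly algebraically) thick. It then invokes the Behrstock--Dru\c{t}u--Mosher theorem that thick groups are not relatively hyperbolic. You bypass thickness and run the induction directly, using peripheral containment of NRH subgroups and almost malnormality of peripherals. This is really the mechanism inside the proof that thickness obstructs relative hyperbolicity, specialised to the two closure operations defining $\T$. It is shorter and avoids verifying the thickness axioms. However, it yields only non-relative hyperbolicity, not the stronger thickness conclusion (with its order bound).

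One precision is needed. The Behrstock--Dru\c{t}u--Mosher containment result concerns quasi-isometrically embedded NRH subgroups, not arbitrary ones. Your lemma as stated (``every NRH subgroup'') is therefore not justified in that generality. You should record that $W_{S_0}$, $W_{S_1}$, $W_{S_2}$ are standard parabolic subgroups, hence convex in the Cayley graph and undistorted. With that, each inductive step goes through as you wrote it.
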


\begin{proposition}
    \label{prop:coxeter group decomposition}
    Let $(W,S)$ be a nonamenable, not relatively hyperbolic Coxeter system with connected defining graph. Then $W$ is $\alpha_t$--rigid.
\end{proposition}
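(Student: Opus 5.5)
By Proposition~\ref{prop:rel hyperbolicity condition}, $(W,S)\in\T$. I would prove, by induction along the inductive construction of $\T$ in Definition~\ref{def:T}, the stronger statement: \emph{for every $(W_A,A)\in\T$ with $A\subset S$, the subgroup $W_A\le W$ is $\alpha_t$--rigid.} Applying this with $A=S$ gives the proposition. Here I use that special subgroups $W_A$ are Coxeter groups on $A$ embedded in $W$, so the notion of $\alpha_t$--rigidity of $W_A$ inside any free product containing $L(W)$ makes sense. Nonamenability of $W$ is then part of what the induction produces, and should not be needed as a separate input.

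\textbf{Base case $\T_0$.} There are two types.
\begin{itemize}
\item If $(W_A,A)=(W_{S_1},S_1)\times(W_{S_2},S_2)$ with both factors irreducible and non-spherical, then each $W_{S_i}$ is infinite. If at least one factor is nonamenable, $W_A$ is a nonamenable direct product of two infinite groups, so Proposition~\ref{prop:rigidity_conditions}(i) applies. If both factors are amenable (for instance, both affine), then $W_A$ is amenable. Amenable groups should be excluded from rigidity only when they sit as a whole free factor. An amenable subgroup with infinite (indeed product) structure inside a nonamenable ambient group needs separate treatment. So I would refine the inductive statement to: ``$W_A$ is either nonamenable and $\alpha_t$--rigid, or amenable and of the form infinite $\times$ infinite, or virtually $\mathbb{Z}^k$ with $k\ge2$.''
\item If $(W_A,A)$ is irreducible affine with $|A|\ge3$, then $W_A$ is virtually $\mathbb{Z}^{|A|-1}$ with $|A|-1\ge2$, so it is again amenable with a product structure.
\end{itemize}
In the amenable cases, the relevant input is the standard fact behind Proposition~\ref{prop:rigidity_conditions}: for the free product malleable deformation, subalgebras of the form $L(A)\bar\otimes L(B)$ with $A$, $B$ infinite, whose relative commutant is nonamenable, are rigid. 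I would therefore phrase the induction in terms of the criteria (iii) and (iv), which only need the pieces to be \emph{rigid} and the intersections to be infinite, and I would check that \cite{Pineapple} allows amenable product pieces. This subtlety is the first thing to settle.

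\textbf{Step (2).} Suppose $A=A_0\cup\{s\}$ with $W_{A_0}$ rigid and $s^\perp\subset A_0$ non-spherical. Consider $K=W_{s^\perp\cup\{s\}}=\langle s\rangle\times W_{s^\perp}$. Its intersection with $W_{A_0}$ contains $W_{s^\perp}$, which is infinite since $s^\perp$ is non-spherical. Better still, use Proposition~\ref{prop:rigidity_conditions}(iv). The element $s$ commutes with $W_{s^\perp}$, so $sW_{A_0}s^{-1}\cap W_{A_0}\supset W_{s^\perp}$ is infinite. Hence $s\in\mathcal N_{wq}(W_{A_0})$, and $W_A=\langle W_{A_0},s\rangle$ lies in $\mathcal N_{wq}(W_{A_0})$ and is rigid. (If (iv) is only stated for the full quasi-normalizer, note that rigidity passes to the intermediate subgroup $W_A$ by applying (iv) inside $W_A$.)

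\textbf{Step (3).} If $A=A_1\cup A_2$ with both $W_{A_i}$ rigid and $A_1\cap A_2$ non-spherical, then $W_{A_1}\cap W_{A_2}\supseteq W_{A_1\cap A_2}$ is infinite. Since the two subgroups generate $W_A$, Proposition~\ref{prop:rigidity_conditions}(iii) applies directly.

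\textbf{Main obstacle.} The main obstacle is the base case: irreducible affine pieces and products of two amenable infinite Coxeter groups are amenable, while the criteria (i) and (ii) assume nonamenability. I would first try to show that rigidity of such amenable pieces still holds in the relevant relative sense, using the spectral gap argument of \cite[Lemma 19.1.3]{AP}. The key point is that this argument only needs the relative commutant of one infinite factor to be nonamenable, or it can be run directly inside the nonamenable $W$.

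Failing that, I would reorganize the induction so that each amenable building block is absorbed into a nonamenable one before any rigidity is invoked. Since $W$ is nonamenable and connected, some step of the construction attaches an amenable block to a block that contains a free subgroup. Using Lemma~\ref{lem:Coxeter amenable}, such a step produces a special subgroup $\langle s\rangle\times W_{s^\perp}$, or a product of two infinite special subgroups one of which is nonamenable, to which (i) applies. I would then restart the induction from these nonamenable product pieces, connecting them by (iii) and (iv) as above.
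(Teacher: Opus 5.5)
\textbf{There is a genuine gap: your argument never handles amenable building blocks.}

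Your treatment of operations (2) and (3) when the pieces are already rigid matches the paper's. In (2), $s$ commutes with the infinite group $W_{s^\perp}\subset W_{S_0}$, so $s\in\mathcal N_{wq}(W_{S_0})$ and (iv) applies. In (3), two rigid special subgroups meeting in the infinite group $W_{S_1\cap S_2}$ give rigidity by (iii).

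You correctly flag the amenable pieces as the main obstacle, but you do not resolve it.
\begin{enumerate}
\item Nonamenability cannot be an output of the induction. An irreducible affine system such as $\tilde A_2$ lies in $\T_0$, so it is not relatively hyperbolic, and it has connected defining graph, yet it is amenable. The right inductive statement is the paper's: every \emph{nonamenable} member of $\T$ is $\alpha_t$--rigid.
\item Your refined trichotomy does not help, because your steps (2) and (3) only ever consume rigid pieces. What must be proved is this: an operation (2) with $W_{S_0}$ amenable, or an operation (3) with $W_{S_1}$ amenable, that produces a nonamenable group yields a rigid group.
\item Your first fallback does not do this. $\langle s\rangle\cong\mathbb Z/2$ is finite. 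When $W_{S_0}$ is amenable, so is $W_{s^\perp}\subset W_{S_0}$. Hence $\langle s\rangle\times W_{s^\perp}$ is amenable and (i) does not apply.
\item A spectral gap argument for an amenable block needs a nonamenable subalgebra commuting with it. You have not produced one, and producing it is exactly the content of the splitting below.
\end{enumerate}

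\textbf{The missing idea is Lemma \ref{lem:minimal}: irreducible affine systems are minimal non-spherical.}

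Operation (2) with $S_0$ amenable:
\begin{enumerate}
\item By Lemma \ref{lem:Coxeter amenable}, the irreducible components $A_1,\dots,A_k$ of $S_0$ are affine or spherical.
\item Since $s^\perp=\bigsqcup_i (A_i\cap s^\perp)$ is non-spherical, some $A_k\cap s^\perp$ is non-spherical. Minimality then forces $A_k\subset s^\perp$.
\item So $W_{A_k}$ commutes with $s$ and with the other components, and the \emph{whole} group splits as $W=W_{S\setminus A_k}\times W_{A_k}$.
\item The first factor is nonamenable, hence infinite, because $W$ is nonamenable and $W_{A_k}$ is amenable. So (i) applies to $W$ directly.
\end{enumerate}

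Operation (3) with $S_1$ amenable: the same argument gives an affine component $A_k$ of $S_1$ with $A_k\subset S_1\cap S_2$.
\begin{enumerate}
\item If $W_{S_2}$ is nonamenable, every generator of $S_1$ normalizes $W_{A_k}\subset W_{S_2}$. Hence $S_1\subset\mathcal N_{wq}(W_{S_2})$ and (iv) applies.
\item If $W_{S_2}$ is also amenable, minimality applied on the $S_2$ side shows $A_k$ is a common irreducible component of $S_1$ and $S_2$. You again get a global product splitting $W=W_{S\setminus A_k}\times W_{A_k}$, and (i) applies.
\end{enumerate}

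Your second fallback names the right target, a product of two infinite special subgroups with one nonamenable. Without this minimality argument, though, you give no reason such a splitting exists, so your induction does not close.
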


\begin{proof}
    Since $(W,S)$ is not relatively hyperbolic, by Proposition \ref{prop:rel hyperbolicity condition}, $(W,S)\in\T$. It follows that $(W,S)$ can be obtained by a finite number of operations of type (2) or (3) in Definition \ref{def:T}. We proceed by induction on the minimum number of operations, $n$, required to obtain $(W,S)$. If $n=0$, then $(W,S)\in\T_0$. Since all irreducible affine Coxeter systems are amenable by Lemma \ref{lem:Coxeter amenable}, $(W,S)$ must be of the form $(W,S)=(W_{S_1},S_1)\times(W_{S_2},S_2)$ where $S_1$ and $S_2$ are non-spherical. Hence, $W$ is the product of two infinite groups, and is therefore $\alpha_t$--rigid by Proposition \ref{prop:rigidity_conditions}(i).
    
    Now suppose $n\ge1$ and $(W,S)$ is obtained by an operation of type (2) in Definition \ref{def:T}. Then $S=S_0\cup\{s\}$ where $(W_{S_0},S_0)\in\T$ and $s^\perp\subset S_0$ is non-spherical. If $(W_{S_0},S_0)$ is nonamenable, then by inductive hypothesis, $(W_{S_0},S_0)$ is $\alpha_t$--rigid. Since $s^\perp$ is infinite, $sW_{S_0}s^{-1}\cap W_{S_0}$ is infinite, so $s\in\mathcal{N}_{wq}(W_{S_0})$. Hence, $(W,S)$ is $\alpha_t$--rigid by Proposition \ref{prop:rigidity_conditions}(iv).
    
    Now suppose $S_0$ is amenable. Partition $S_0$ into irreducible subsets $A_1,...,A_k$, so that $(W_{S_0},S_0)=(W_{A_1},A_1)\times...\times (W_{A_k},A_k)$. This induces a partition of $s^\perp$ into (possibly empty) subsets $A_i'=A_i\cap s^\perp$, so that $(W_{s^\perp},s^\perp)=(W_{A_1'},A_1')\times...\times (W_{A_k'},A_k')$. Since $(W_{s^\perp},s^\perp)$ is non-spherical, we may suppose without loss of generality that $(W_{A_k'},A_k')$ is non-spherical. Since $(W_{A_k},A_k)$ is non-spherical and amenable, it must be affine by Lemma~\ref{lem:Coxeter amenable}, and therefore minimal non-spherical by Lemma~\ref{lem:minimal}. In particular, $(W_{A_k'},A_k')=(W_{A_k},A_k)$. Then $W=(W_{S'},S')\times(W_{A_k},A_k)$ where $S'=S\setminus A_k$. Since $(W_{A_k},A_k)$ is amenable and $(W,S)$ is nonamenable, $(W_{S'},S')$ must be nonamenable, and in particular, $(W_{S'},S')$ must be infinite. Hence, $W$ is the product of two infinite groups, and is therefore $\alpha_t$--rigid by Proposition \ref{prop:rigidity_conditions}(i).

    If $(W,S)$ is obtained by an operation of type (3) in Definition \ref{def:T}, then $S=S_1\cup S_2$ where $(W_{S_1},S_1),(W_{S_2},S_2)\in\T$ and $S_1\cap S_2$ is non-spherical. If both $(W_{S_1},S_1)$ and $(W_{S_2},S_2)$ are nonamenable, then by inductive hypothesis, $(W_{S_1},S_1)$ and $(W_{S_2},S_2)$ are $\alpha_t$--rigid. Hence, $(W,S)$ is generated by two $\alpha_t$--rigid groups with infinite intersection, so $(W,S)$ is $\alpha_t$--rigid by Proposition \ref{prop:rigidity_conditions}(iii).
    
    Suppose $S_1$ is amenable and $S_2$ is not. As before, we have a partition of $S_1$ into irreducible subsets $A_1,...,A_k$, so that $(W_{S_1},S_1)=(W_{A_1},A_1)\times...\times (W_{A_k},A_k)$ and $(W_{S_1\cap S_2},S_1\cap S_2)=(W_{A_1'},A_1')\times...\times (W_{A_k'},A_k')$ where $A_i'=A_i\cap S_2$. Since $S_1\cap S_2$ is non-spherical, as before, we may assume without loss of generality that $(W_{A_k'},A_k')=(W_{A_k},A_k)$. If $s\in W_{A_i}$, then either $s\in W_{A_k}$ or $s$ commutes with $W_{A_k}$. In either case, $sW_{A_k}s^{-1}=W_{A_k}$, so $|sW_{S_2}s^{-1}\cap W_{S_2}|\ge|W_{A_k}|=\infty$. Hence, $S_1\subset\mathcal{N}_{wq}(W_{S_2})$. Since $W_{S_2}$ is $\alpha_t$--rigid by inductive hypothesis, by Proposition \ref{prop:rigidity_conditions}(iv), $(W,S)$ is $\alpha_t$--rigid as well.
    
    Now suppose $S_1$ and $S_2$ are amenable. Then we have a partition of $S_2$ into irreducible subsets $B_1,...,B_l$, so that $(W_{S_2},S_2)=(W_{B_1},B_1)\times...\times (W_{B_l},B_l)$ and $(W_{S_1\cap S_2},S_1\cap S_2)=(W_{B_1'},B_1')\times...\times (W_{B_l'},B_l')$ where $B_i'=B_i\cap S_1$. For $1\le i\le k$, $1\le j\le l$, let $C_{ij}=A_i'\cap B_j'$. Then we may write $W_{S_1\cap S_2}$ as a direct product $W_{S_1\cap S_2}=\prod_{i,j}(W_{C_{ij}},C_{ij})$.
    Since $(W_{S_1\cap S_2},S_1\cap S_2)$ is non-spherical, we may suppose without loss of generality that $(W_{C_{kl}'},C_{kl}')$ is non-spherical, so $(W_{C_{kl}},C_{kl})=(W_{A_k},A_k)=(W_{B_l},B_l)$. Then $W=(W_{S'},S')\times(W_{C_{kl}},C_{kl})$ where $S'=S\setminus C_{kl}$. Since $(W_{C_{kl}},C_{kl})$ is amenable and $W$ is nonamenable, $(W_{S'},S')$ must be infinite. Hence, $W$ is the product of two infinite groups, and is therefore $\alpha_t$--rigid by Proposition \ref{prop:rigidity_conditions}(i).
\end{proof}

\begin{remark}\label{first l2 betti rmk}
    If $(W,S)\in\T_0$, then $W$ is either amenable or the product of two infinite groups, and so has vanishing first $\ell^2-$Betti number.
    Then combining Proposition \ref{prop:rel hyperbolicity condition} with \cite[Proposition 7.2]{PetersonThom2011}, we see that if $(W,S)$ is not relatively hyperbolic, then the first $\ell^2-$Betti number of $W$ is zero.
\end{remark}

\section{On the measure equivalence analogue}
In the case of measure equivalence, the situation is much more complicated. Consider groups of the form 
$$\overset{n}{\underset{i=1}{\ast}} (F_{a_{i,1}}\times\cdots \times F_{a_{i,k_i}}).$$
We would like to determine when two such groups are measure equivalent. 

\begin{theorem}\label{thm:comfreeproduct}
    Let $G_1,G_2$ be two groups as above. Assume that all of the integers $k_i,a_{i,j}$ are $\geq 2$. Then $G_1$ and $G_2$ are commensurable if and only if their $\ell^2-$Betti numbers are proportional.
\end{theorem}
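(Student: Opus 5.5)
The plan is to compute the $\ell^2$--Betti numbers explicitly, which gives the easy direction, and then to build matching finite index subgroups by hand for the converse.

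\textbf{Computing the invariants.} Write $P_i=\F_{a_{i,1}}\times\cdots\times\F_{a_{i,k_i}}$ and $G=\ast_{i=1}^n P_i$. By the Künneth formula, $\beta^{(2)}_p(P_i)$ vanishes for $p\neq k_i$, and
\[
\beta^{(2)}_{k_i}(P_i)=\prod_j (a_{i,j}-1)=:c_i>0 .
\]
For a free product of infinite groups, $\beta_1$ is additive plus one per free product sign, while $\beta_p$ for $p\ge 2$ is additive. Hence
\[
\beta^{(2)}_1(G)=n-1,\qquad \beta^{(2)}_k(G)=\sum_{i:\,k_i=k} c_i \quad (k\ge 2).
\]
Since $\ell^2$--Betti numbers scale by the index under passing to finite index subgroups, commensurable groups have proportional $\ell^2$--Betti numbers. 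So only the converse needs work.

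\textbf{Flexibility of the factors.} For the converse, first record how much freedom each factor $P_i$ has. A subgroup of index $e_j$ in $\F_{a}$ is free of rank $1+e_j(a-1)$. Therefore, for every $e\ge 1$ and every factorization $e=\prod_j e_j$, the group $P_i$ contains an index $e$ subgroup $\prod_j\F_{1+e_j(a_{i,j}-1)}$, again a product of $k_i$ nonabelian free groups. Moreover, any two products of $k$ nonabelian free groups with equal $\beta_k$ are commensurable. The model case is $\F_3\times\F_3$ and $\F_5\times\F_2$, which both contain $\F_5\times\F_3$ with index $2$. In general, one can take a common "least common multiple" factorization coordinatewise.

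\textbf{Constructing subgroups via covers.} Next, view $G$ as the fundamental group of a tree of groups with vertex groups $P_i$ and trivial edge groups. An index $d$ subgroup $H$ corresponds to a degree $d$ cover, i.e.\ an action of $G$ on $\{1,\dots,d\}$. By Kurosh, $H$ is a free product of $V$ conjugates of the subgroups $P_i\cap\mathrm{Stab}$, one for each $P_i$--orbit, together with a free group $\F_r$. Comparing $\beta_1$ gives $d(n-1)=V-1+r$. The goal is to choose the $P_i$--actions so that $r=0$. This means the orbits of the $P_i$ must glue into a tree, with total number of orbits $V=d(n-1)+1$, and each orbit of size $e$ yields a factor of $\beta_{k_i}$ equal to $e c_i$.

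Since each $P_i$ surjects onto every finite cyclic group and onto products of them, one can prescribe arbitrary orbit partitions for each $P_i$ independently. The tree condition is then a purely combinatorial condition on the hypergraph whose vertices are the $d$ sheets and whose hyperedges are the orbits. Proportionality of $\beta_1$ fixes the ratio $d_1/d_2=(n_2-1)/(n_1-1)$ up to a common scaling. Proportionality of $\beta_k$ for $k\ge2$ says that the total "mass" in degree $k$ matches. I would then choose $d_1,d_2$ and orbit partitions so that both $G_1$ and $G_2$ contain finite index subgroups isomorphic to the same normal form. A natural candidate normal form is a free product of many copies of one fixed product $Q_k$ of $k$ free groups for each $k$, with the same multiplicities on both sides. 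To reach it, first refine each orbit factor to a subgroup commensurable with, and then equal to, a power of a common $Q_k$, using the flexibility step.

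\textbf{Main obstacle.} The hard step is this simultaneous realization: splitting each factor into pieces of the right $\beta_k$ while keeping the orbit hypergraph a tree, so that no free factor $\F_r$ appears. In addition, the individual pieces must be isomorphic and not merely commensurable. My first attempt would be to use orbits of sizes $1$ and $d$ only. Each $P_i$ either acts trivially, contributing $d$ copies of $P_i$, or transitively, contributing one index $d$ subgroup. Arranging these as a star-shaped tree and counting should reduce the question to solving linear Diophantine equations in the multiplicities, whose solvability should follow from proportionality after scaling $d$ by a large common multiple. For isomorphism of the pieces, I would use the coordinatewise freedom in the choice of the $e_j$.
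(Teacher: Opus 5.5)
You use the same framework as the paper: Künneth and free-product formulas for the easy direction, index-$d$ subgroups as permutation actions of the free factors, and the Bass--Serre count $d(n-1)=V-1+r$. However, the converse is not proved. The step you call the main obstacle is the whole content of the theorem, and you leave it at ``should reduce'' and ``should follow''. Worse, the target you set yourself, $r=0$ with all dimension-$k$ pieces copies of one fixed $Q_k$, is unreachable in general.

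Take the paper's example $G=(\F_3\times\F_2)\ast(\F_2\times\F_2)\ast(\F_2\times\F_2)$, which is commensurable with $(\F_2\times\F_2)\ast(\F_2\times\F_2)$. Suppose $H\le G$ has index $d$ and $H\cong Q^{\ast V}$ with $Q$ a product of two nonabelian free groups. Then $\beta_1$ gives $V=2d+1$ and $\beta_2$ gives $V\beta_2(Q)=4d$. Since $\gcd(2d+1,4d)=1$, this is impossible.

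Your concrete attempt also fails. With orbits of sizes $1$ and $d$ only, one gets $r=(d-1)(n-1-t)$, where $t$ is the number of factors acting trivially. So $r=0$ forces exactly one factor to act transitively and all others trivially. The pieces are then the original $P_i$ themselves, which need not be isomorphic to each other. You also cannot ``refine each orbit factor'' afterwards. Replacing $A$ in $A\ast B$ by a proper finite-index subgroup $A'$ gives $A'\ast B$, which has infinite index. Any refinement must go through a larger permutation action, which changes the orbit counts and the free rank. Finally, prescribing orbit partitions via cyclic quotients does not control the isomorphism type of the stabilizers.

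The missing idea is to allow the free factor $\F_r$. Its rank is forced by the Euler characteristic once the vertex groups are fixed, so it matches on both sides automatically. The paper proceeds as follows.
\begin{enumerate}[(a)]
\item Choose $x$ with $x-1$ a common multiple of all $a_{i,j}-1$ and $b_{i,j}-1$, and set $X_m=\F_x^m$. Then $X_m$ has index $(x-1)^m/\beta_m(P_i)$ in every dimension-$m$ factor $P_i$ of either group.
\item Let each $P_i$ act on a disjoint union of copies of $P_i/X_m$, with $N_j$ points in total on side $j$. Here $N_j$ is a common multiple of the indices on that side, chosen so that $N_1(n_1-1)=N_2(n_2-1)$. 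Relabel the points to make the $G_j$-action transitive.
\item Every orbit stabilizer is now conjugate to $X_m$, and the point stabilizer is $H_j\cong\ast_m X_m^{\ast V_j^{(m)}}\ast\F_{R_j}$. Here $V_j^{(m)}=N_j\beta_m(G_j)/(x-1)^m$ and $R_j=N_j(n_j-1)-\sum_m V_j^{(m)}+1$.
\item Proportionality together with $N_1(n_1-1)=N_2(n_2-1)$ forces $V_1^{(m)}=V_2^{(m)}$ and $R_1=R_2$. Hence $H_1\cong H_2$.
\end{enumerate}
The case $n_1=1$ is handled separately: then $\beta_1(G_1)=0$, so $n_2=1$, and both groups are single products of the same dimension, hence commensurable.
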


It is well known (\cite{Ga02bis}) that the proportionality of  $\ell^2$-Betti numbers is a necessary condition, and that we even have the implication $$G_1,G_2 \textrm{ commensurable} \Rightarrow G_1\underset{ME}{\sim}G_2 \Rightarrow \exists c, \forall n, b_n^{(2)}(G_1)=cb_n^{(2)}(G_2).$$

In particular, we obtain the following classification under measure equivalence.
\begin{corollary}\label{coro:mefreeprod}
   Two free products as in Theorem \ref{thm:comfreeproduct} are measure equivalent if and only if they are commensurable, if and only if their $\ell^2$-Betti numbers are proportional.

   Furthermore, there exist measure equivalent RAAGs whose defining graphs have different numbers of connected, not necessarily complete, components.
\end{corollary}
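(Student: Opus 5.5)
The first assertion follows by combining Theorem~\ref{thm:comfreeproduct} with the chain of implications recorded just before the corollary. If $G_1$ and $G_2$ are commensurable, they are measure equivalent. By Gaboriau's theorem \cite{Ga02bis}, measure equivalence forces $b_n^{(2)}(G_1)=c\,b_n^{(2)}(G_2)$ for some $c>0$ and all $n$. Conversely, proportionality of $\ell^2$-Betti numbers gives commensurability by Theorem~\ref{thm:comfreeproduct}. So the three conditions form a closed cycle of implications, and nothing beyond Theorem~\ref{thm:comfreeproduct} is needed.

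For the second assertion, I would use the example from the introduction:
\[
G_1=(\F_2\times\F_2)\ast(\F_2\times\F_2),\qquad G_2=(\F_3\times\F_2)\ast(\F_2\times\F_2)\ast(\F_2\times\F_2).
\]
First observe that both are RAAGs. $\F_a\times\F_b$ is the RAAG on the join of a discrete graph with $a$ vertices and a discrete graph with $b$ vertices, i.e.\ the complete bipartite graph $K_{a,b}$. This graph is connected and, for $a,b\ge 2$, not complete. A free product of RAAGs is the RAAG on the disjoint union of the defining graphs. Hence $G_1$ has a defining graph with $2$ connected components and $G_2$ one with $3$, each component connected but not complete. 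All parameters $k_i,a_{i,j}$ equal $2$ or $3$, so Theorem~\ref{thm:comfreeproduct} applies.

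It remains to check that the $\ell^2$-Betti numbers are proportional. By the Künneth formula, $b_n^{(2)}(\F_a\times\F_b)$ is $(a-1)(b-1)$ for $n=2$ and vanishes otherwise. For a free product of $m$ infinite groups, $b_0^{(2)}=0$, $b_1^{(2)}$ is the sum of the first $\ell^2$-Betti numbers of the factors plus $m-1$, and $b_n^{(2)}$ is additive for $n\ge2$.

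This gives the following values:
\begin{itemize}
\item for $G_1$: $b_1^{(2)}=1$ and $b_2^{(2)}=1+1=2$;
\item for $G_2$: $b_1^{(2)}=2$ and $b_2^{(2)}=2+1+1=4$.
\end{itemize}
All other $\ell^2$-Betti numbers vanish for both groups. Hence $b_n^{(2)}(G_2)=2\,b_n^{(2)}(G_1)$ for all $n$. By the first part, $G_1$ and $G_2$ are commensurable, hence measure equivalent, although their defining graphs have $2$ and $3$ components respectively.

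The only point requiring care is the free-product formula for $b_1^{(2)}$, namely the extra contribution $m-1$ coming from infinite factors. This is standard, e.g.\ via the Mayer–Vietoris sequence for amalgamation over the trivial group; the rest is immediate.
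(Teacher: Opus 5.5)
Your proposal is correct and follows the paper's own route. The first part is the cycle commensurable $\Rightarrow$ ME $\Rightarrow$ proportional $\ell^2$-Betti numbers (Gaboriau), closed by Theorem~\ref{thm:comfreeproduct}; the second part is the introduction's example $(\F_2\times\F_2)\ast(\F_2\times\F_2)$ versus $(\F_3\times\F_2)\ast(\F_2\times\F_2)\ast(\F_2\times\F_2)$. Your explicit check that the $\ell^2$-Betti numbers are $(1,2)$ and $(2,4)$ in degrees $1,2$, together with the identification of $\F_a\times\F_b$ as the RAAG on $K_{a,b}$, supplies details the paper leaves implicit.
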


Note that the  $\ell^2-$Betti numbers of these groups are not difficult to compute, and their values are actually an important ingredient in the proof.

We say that a free factor $A_i= F_{a_{i,1}}\times\cdots \times F_{a_{i,k_i}}$ has dimension $m$ if $k_i=m$ (this is, $m$ is the maximal dimension of a free abelian subgroup of $A_i$). Then the only factors contributing to the $m$th $\ell^2$-Betti number are the factors of dimension $m$. More precisely, if $$ G=\overset{n}{\underset{i=1}{\ast}} A_i \textrm{ where }A_i= F_{a_{i,1}}\times\cdots \times F_{a_{i,k_i}},$$then  $$b_1^{(2)}(G)=n-1, \quad b_m^{(2)}(G)=\sum_{k_i=m} b_m^{(2)}(A_i)$$and$$ b_{k_i}^{(2)}(A_i)=\prod_{j=1}^{k_i} (a_{i,j}-1).$$

Let us begin with a simpler yet illuminating case.
\begin{proposition}
    Let $a,b,p,q,c,d,s,t\geq 2$ be integers. Let $G_1=(F_a\times F_b)*(F_p\times F_q)$ and $G_2=(F_c\times F_d)*(F_s\times F_t)$. If 
    $$(a-1)(b-1)+(p-1)(q-1)=(c-1)(d-1)+(s-1)(t-1),$$
    then $G_1$ and $G_2$ are commensurable.
\end{proposition}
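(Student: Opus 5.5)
The plan is to exhibit, in each of $G_1$ and $G_2$, a finite-index subgroup of the \emph{same} canonical shape: a free product of some number of copies of a single group $F_{x+1}\times F_{y+1}$, together with a free group of some rank. The condition $N_1+N_2=M_1+M_2$ should then force the two shapes to coincide. Write $N_1=(a-1)(b-1)$, $N_2=(p-1)(q-1)$, $M_1=(c-1)(d-1)$, $M_2=(s-1)(t-1)$, so the hypothesis is $N_1+N_2=M_1+M_2$. These are the second $\ell^2$-Betti numbers, or Euler characteristics, of the four product factors.

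\emph{Step 1 (factors).} For $F_a\times F_b$ and positive integers $l,m$, the subgroup $F_{l(a-1)+1}\times F_{m(b-1)+1}$ has index $lm$, and $(l(a-1))(m(b-1))=lmN$. Fix integers $x,y\ge1$ with $xy=P$, where $P$ is a common multiple of $N_1,N_2,M_1,M_2$, and also arrange $(a-1),(p-1),(c-1),(s-1)\mid x$ and $(b-1),(q-1),(d-1),(t-1)\mid y$. For instance, take $x=y=L$ with $L$ a common multiple of all the $a-1,\dots,t-1$, and $P=L^2$. Then each factor, say $F_a\times F_b$, contains $F_{x+1}\times F_{y+1}$ with index $P/N_1$ (take $l=x/(a-1)$, $m=y/(b-1)$). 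The point is that every factor contains the same group $K:=F_{x+1}\times F_{y+1}$, with index equal to $P$ divided by that factor's Euler characteristic.

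\emph{Step 2 (covers of the free product).} Let $X=X_A\vee X_B$ be a $K(G_1,1)$, with $X_A,X_B$ products of roses. Fix a large degree $D$ divisible by $P$. Build a $D$-sheeted cover $\widehat X$ as follows. Over $X_A$ take $DN_1/P$ disjoint copies of the connected $(P/N_1)$-sheeted cover corresponding to $K$. Over $X_B$ take $DN_2/P$ copies of the $(P/N_2)$-sheeted cover. Then each of the two preimages of the wedge point has $D$ points. Glue these by any bijection chosen so that the result is connected; this is possible since both sides have at least one component and $D$ points each. The result is a graph of spaces whose underlying graph is bipartite, with $D(N_1+N_2)/P$ vertices (all with vertex group $K$) and $D$ edges (trivial edge groups). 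Hence $\pi_1\widehat X\cong K^{*D(N_1+N_2)/P}*F_{r}$, where $r=D-D(N_1+N_2)/P+1$, and this is an index-$D$ subgroup of $G_1$. The same construction in $G_2$ with the same $D$ yields $K^{*D(M_1+M_2)/P}*F_{r'}$ with $r'=D-D(M_1+M_2)/P+1$.

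\emph{Step 3.} Since $N_1+N_2=M_1+M_2$, the two subgroups are isomorphic, so $G_1$ and $G_2$ are commensurable. Note that the equality is used only in the vertex count; the edge count is $D$ in both cases automatically.

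The main obstacle is Step 1, namely choosing parameters so that a single group $K$ embeds with finite index in all four factors. A naive attempt only produces subgroups such as $F_{D+1}\times F_{N+1}$, whose second coordinate still remembers $N_i$. I expect to handle this exactly by the divisibility choice above, taking $x$ and $y$ to be common multiples of all the $a-1,\dots,t-1$. A minor check is connectivity of the glued cover, together with the fact that the bipartite graph-of-spaces description gives the stated free product via the standard computation of $\pi_1$ of a graph of groups with trivial edge groups.
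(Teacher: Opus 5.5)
\textbf{Your approach matches the paper's, but the connectivity step has a genuine gap.} The paper also picks one group $H=F_x\times F_y$ of finite index in all four factors. It builds an index-$N$ subgroup from an action of $G_1$ on $N$ points, whose restriction to each factor is a union of copies of $A_i/H$; this is exactly your $D$-sheeted cover of $X_A\vee X_B$. It then reads off $H^{*V}*F_{N-V+1}$ from the Bass--Serre quotient.

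The gap is in the step you called a minor check. Your reason for connectivity (``both sides have at least one component and $D$ points each'') is wrong. The glued space is connected if and only if the bipartite graph is connected, where the vertices are the $V=D(N_1+N_2)/P$ components and the edges are the $D$ glued points. This requires $D\ge V-1$, i.e.\ exactly that your $r=D-V+1$ be nonnegative. With your recipe ($L$ any common multiple, $D$ a large multiple of $P$) this fails as soon as $N_1+N_2>P$.

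For example, take $G_1=(F_7\times F_7)*(F_7\times F_2)$ and $G_2=(F_7\times F_7)*(F_3\times F_4)$. They satisfy $36+6=36+6$, and they are not isomorphic (their abelianizations have ranks $23$ and $21$). With $L=6$ and $P=36$ you get $V=7D/6>D+1$ for every $D$ divisible by $36$. So every gluing is disconnected, and your formula gives $r=1-D/6<0$. (If all of $a,\ldots,t$ equal $2$, your cover is $D$ disjoint copies of $X$.) A disconnected cover does not give an index-$D$ subgroup, and your vertex and edge counts belong to the whole graph, not to a component.

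The repair is easy, and you can do it in either of two ways.
\begin{enumerate}
\item Replace $L$ by $2L$. Then each $N_i,M_j\le P/4$, so $V\le D/2$ for every admissible $D$.
\item Take $D$ to be the least common multiple of the indices $P/N_1,P/N_2,P/M_1,P/M_2$, not a large multiple of $P$. Each $N_i$ and $M_j$ divides $P$. So if $N_1+N_2\le P$, then $V\le D$. If $N_1+N_2>P$, some $N_i$ and some $M_j$ equal $P$; say $N_1=M_1=P$, which forces $N_2=M_2$. Then the least common multiple is $D=P/N_2$ and $V=D+1$, so the graph is a star.
\end{enumerate}
In both cases you also need a standard fact. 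A connected bipartite multigraph with prescribed positive degrees (all $P/N_1$ on one side, all $P/N_2$ on the other, with equal sums $D$) exists once $D\ge V-1$. To build it, first make a spanning tree respecting the degree bounds by repeatedly attaching a degree-one vertex, then pair off the leftover points arbitrarily.

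The paper's sentence ``up to conjugating $A_2\to S_N$, we may assume that the action of $G_1$ on $N$ points is transitive'' is the same step and rests on the same count. It holds for $N$ the least common multiple, but not for an arbitrary common multiple as written.
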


\begin{proof} 
    Denote by $A_1,A_2$ the free factors of $G_1$ and by $B_1,B_2$ the free factors of $G_2$.
    Recall that by the Nielsen-Schreier theorem (\cite[Théorème 5]{Serre77}), a subgroup of index $k$ of $F_n$ is free of rank $r=1+k(n-1)$. Choose $(x-1)$ to be a common multiple of $(a-1),(p-1),(c-1),(s-1)$ and $(y-1)$ to be a common multiple of $(b-1),(q-1),(d-1),(t-1)$. Let $H=F_x\times F_y$. Then $H$ is a finite-index subgroup of each free factor. Let $m_{A_1}=[A_1:H]=\frac{(x-1)(y-1)}{(a-1)(b-1)}$, and define the indices $m_{A_2},m_{B_1},m_{B_2}$ similarly.

    Let $N$ be a common multiple of all four indices. We will construct isomorphic subgroups of index $N$ in $G_1$ and $G_2$. To do so, consider the action of $A_1$ on $A_1/H\times \{1,\cdots,N/m_{A_1}\}$ that is trivial in the second component. This is a morphism $A_1\to S_N$. Similarly, the action of $A_2$ gives a morphism $A_2\to S_N$. The universal property of free products then gives a morphism $G_1\to S_N$. Up to conjugating $A_2\to S_N$, we may assume that the action of $G_1$ on $N$ points is transitive. Denoting by $H_1$ the stabilizer of a point, we have that $H_1$ is a subgroup of $G_1$ of index $N$. 

    The group $G_1$ is the fundamental group of a graph of groups with two vertices labeled $A_1$ and $A_2$ with one edge between them, labeled with the trivial group. The Bass-Serre covering tree $T$ has vertices $G_1/A_1\sqcup G_1/A_2$ and edge set $G_1$, with edges connecting $gA_1$ to $gA_2$ for $g\in G_1$. Since $G_1$ acts on $T$, so does $H_1$, and hence $H_1$ is isomorphic to the fundamental group of the quotient graph of groups $H_1\backslash T$ (\cite[Théorème 13]{Serre77}).

    Let $\Gamma$ be the bipartite graph with vertex $V=\{1,\cdots, N/m_{A_1}\}\sqcup \{1,\cdots, N/m_{A_2}\}$. Each vertex is an orbit of the action on $N$ points given by $A_1\to S_N$ or an orbit of the action given by $A_2\to S_{N}$. The vertices are labeled by $H$, and for each $1\leq k\leq N$, there is an edge between the orbit of $k$ under $A_1$ and the orbit of $k$ under $A_2$. We claim that $\Gamma \simeq H_1\backslash T$. Indeed, the vertex set of $H_1\backslash T$ is $H_1\backslash G_1/A_1\sqcup H_1\backslash G_1/A_2$. But $H_1\backslash G_1/A_1\simeq A_1\backslash (G_1/H_1)$ is simply the set of orbits under the action of $A_1$, and the stabilizer is isomorphic to $H$, and similarly for the other set.

    The number of edges of $\Gamma$ is $N$, and the number of vertices is $$V_1=\frac{N}{m_{A_1}}+\frac{N}{m_{A_2}}=\frac{N}{(x-1)(y-1)}\left[(a-1)(b-1)+(p-1)(q-1)\right].$$ Let $R_1=N-V_1+1$ be the number of edges not in a spanning tree of $\Gamma$. Then $$H_1\simeq H^{*V_1}*F_{R_1}.$$

    The same construction on $G_2$ yields a subgroup $H_2$ of index $N$ such that $H_2\simeq H^{*V_2}*F_{R_2}$ with $$V_2=\frac{N}{m_{B_1}}+\frac{N}{m_{B_2}}=\frac{N}{(x-1)(y-1)}\left[(c-1)(d-1)+(s-1)(t-1)\right]$$ and $R_2=N-V_2+1$.

    Thus, by assumption on the integers $a,b,p,q,c,d,s$, and $t$, $V_1=V_2$ and $R_1=R_2$, so $H_1\simeq H_2$ and the proof is complete.
\end{proof}

\begin{proof}[Proof of Theorem \ref{thm:comfreeproduct}] 
First, as discussed above, the fact that commensurability implies proportionality of $\ell^2$-Betti numbers is clear, so we only need to prove the other direction.

    We will proceed as in the simpler case: choosing finite index subgroups of each factor, and constructing a finite index subgroup of the free product whose Kurosh decomposition is controlled by the Betti numbers. However, we cannot get away with a single group; we have to choose different groups depending on the dimensions of the factors.

    Let $G_1=\overset{n_1}{\underset{i=1}{\ast}} A_i$ and $G_2=\overset{n_2}{\underset{i=1}{\ast}} B_i$ have proportional $\ell^2-$Betti numbers, where $A_i=(F_{a_{i,1}}\times\cdots \times F_{a_{i,k_i}})$ and $B_i=(F_{b_{i,1}}\times\cdots \times F_{b_{i,\ell_i}})$.

    First, if $n_1=1$, then the first $\ell^2-$Betti number vanishes, so $n_2=1$ as well. In this case, the only non-zero $\ell^2-$Betti number of $G_1$ is $b_{k_1}^{(2)}(G_1)$, with $k_1$ the dimension of the single factor. Thus the single factor of $G_2$ has the same dimension, and it is well-known (again, using the Nielsen-Schreier theorem and taking direct products of finite index subgroups) that $G_1$ and $G_2$ are commensurable.   
    
    Thus, we may now assume $n_1,n_2\geq 2$. Let $x-1$ be a common multiple of all of the $a_{i,j}-1$. For any $m$, set $X_m=(F_x)^m$. Then if $A_i$ has dimension $m$, $X_m$ is a subgroup of $A_i$ of index $h_{A_i}=\frac{(x-1)^m}{b_m^{(2)}(A_i)}$, and similarly, if $B_i$ has dimension $m$, then $[B_i:X_m]=h_{B_i}=\frac{(x-1)^m}{b_m^{(2)}(B_i)}$. Let $N_1$ be a common multiple of the $h_{A_i}$ and $N_2$ be a common multiple of the $h_{B_i}$. By taking a larger common multiple, we may assume that $N_1(n_1-1)=N_2(n_2-1)$.

    Let us construct a subgroup $H_1$ of $G_1$ of index $N_1$ as follows: for any $i$, $A_i$ acts on $A_i/X_m\times \{1,\cdots,N_1/h_{A_i}\}$, where $m$ is the dimension of $A_i$ and the action is trivial on the second component. This action on $N_1$ points gives a morphism $A_i\to S_{N_1}$. The universal property of free products then gives a morphism $G_1\to S_{N_1}$. We may assume, up to choosing a correct labeling of the $N_1$ points (i.e. by conjugating one of the images of the factors), that the action of $G_1$ is transitive. Thus, we can set $H_1$ to be the stabilizer of a given point, and this is a subgroup of $G_1$ of index $N_1$.

    The group $G_1$ is the fundamental group of a star centered at a vertex $\{e\}$ with trivial edge groups and with $n_1$ other vertices with groups $A_i$. If $T$ is the Bass-Serre universal covering tree of this graph, then the quotient graph of groups $\Gamma=H_1\backslash T$ is as follows: first, there are $N_1$ vertices with the trivial group (corresponding to lifts of the central vertex), each corresponding to one of the $N_1$ points on which the factors $A_i$ act. Then, for each $A_i$, there are $N_1/h_{A_i}$ vertices with group $X_m$, each corresponding to an orbit of the action of $A_i$ on $N_1$ points. Furthermore, for each point $1\leq k\leq N$ and each $1\leq i\leq n_1$, there is an edge with trivial edge group between the central vertex corresponding to $k$ and the vertex of type $A_i$ corresponding to the orbit containing $k$. Then $H_1$ is the fundamental group of $\Gamma$, so 
    $$H_1\simeq \underset{m}{\ast}(X_n)^{\ast V^{(m)}_1}\ast F_{R_1}$$ 
    where $V^{(m)}_1$ is the number of vertices with vertex group $X_m$ and $R_1$ is the number of edges left after removing a spanning tree. Therefore, 
    $$V^{(m)}_1=\sum_{k_i=m} \frac{N_1}{h_{A_i}}=\sum_{k_i=m} \frac{N_1}{(x-1)^m}b_m^{(2)}(A_i)=\frac{N_1}{(x-1)^m}b_m^{(2)}(G_1)$$
    and 
    $$R_1=N_1(n_1-1)-\sum_m V^{(m)}_1+1. $$

    Doing the same construction for $G_2$, we obtain a subgroup $H_2$ of index $N_2$ in $G_2$ such that $$H_2\simeq \underset{m}{\ast}(X_n)^{\ast V^{(m)}_2}\ast F_{R_2}$$with  $$V^{(m)}_2=\sum_{\ell_i=m} \frac{N_2}{h_{B_i}}=\sum_{k_i=m} \frac{N_2}{(x-1)^m}b_m^{(2)}(B_i)=\frac{N_2}{(x-1)^m}b_m^{(2)}(G_2)$$and $$R_2=N_2(n_2-1)-\sum_m V^{(m)}_2+1. $$
    But by assumption, $N_1(n_1-1)=N_2(n_2-1)$. Since $b_1^{(2)}(G_1)=n_1-1$, $b_1^{(2)}(G_2)=n_2-1$, and the $\ell^2-$Betti numbers of $G_1$ and $G_2$ are proportional, we conclude that $H_1\simeq H_2$ and so $G_1,G_2$ are commensurable.
\end{proof} 

\begin{remark}
    As pointed out in Corollary \ref{coro:mefreeprod}, we obtain a complete classification of this family under measure equivalence. If two such groups are ME, then they are von Neumann equivalent in the sense of \cite{IsPeRu19}. It is therefore interesting to ask the following question: if two such groups are von Neumann equivalent, are their $\ell^2-$Betti numbers proportional? (equivalently, are they commensurable?). This would be an interesting generalization of \cite[Thm. 6.4]{DriVa25}. Note that it is still an open question whether the ratio of $\ell^2-$Betti numbers is a von Neumann equivalence invariant.
\end{remark}
    
\bibliographystyle{alpha}
	\bibliography{references}	

\end{document}